\newtheorem{conj}{Conjecture}[section]
\newtheorem{thm}[conj]{Theorem}
\newtheorem{rem}[conj]{Remark}
\newtheorem{lem}[conj]{Lemma}
\newtheorem{prop}[conj]{Proposition}
\newtheorem{defn}[conj]{Definition}
\newtheorem{cor}[conj]{Corollary}
\newtheorem*{rep@theorem}{\rep@title}
\newcommand{\newreptheorem}[2]{%
\newenvironment{rep#1}[1]{%
 \def\rep@title{#2 \ref{##1}}%
 \begin{rep@theorem}}%
 {\end{rep@theorem}}}
\newcommand{\vol}{\mathrm{Vol}}
\newcommand{\R}{\mathbb{R}}
\newcommand{\N}{\mathbb{N}}
\def\s{\mathbb{S}}
\def\R{{\mathbb R}}
\def\phi{\varphi}
\def\bee{\begin{eqnarray*}}
\def\ene{\end{eqnarray*}}
\newcommand\nnfootnote[1]{%
  \begin{NoHyper}
  \renewcommand\thefootnote{}\footnote{#1}%
  \addtocounter{footnote}{-1}%
  \end{NoHyper}
}
\begin{document}
\title{%
    Weighted Brunn-Minkowski Theory I \\
    \large On Weighted Surface Area Measures}

\author{Matthieu Fradelizi\thanks{Supported in part by the  B\'ezout Labex funded by ANR, reference ANR-10-LABX-58.}, Dylan Langharst\footnotemark[1] 
 \thanks{Supported in part by the U.S.-Israel Binational Science Foundation (BSF) Grant 2018115, and completed  while the authors were in residence at the Institute for Computational and Experimental Research in Mathematics in Providence, RI, during the Harmonic Analysis and Convexity program; this residency was supported by the National Science Foundation under Grant DMS-1929284.}, 
Mokshay Madiman, 
and Artem Zvavitch\footnotemark[1] \footnotemark[2]}

\date{\today}
\maketitle

\begin{abstract}
The Brunn-Minkowski theory in convex geometry concerns, among other things, the volumes, mixed volumes, and surface area measures of convex bodies. We study generalizations of these concepts to Borel measures with density in $\R^n$-- in particular, the weighted versions of mixed volumes (the so-called mixed measures) when dealing with up to three distinct convex bodies. We then formulate and analyze weighted versions of classical surface area measures, and obtain a new integral formula for the mixed measure of three bodies. As an application, we prove a B\'ezout-type inequality for rotational invariant log-concave measures, generalizing a result by Artstein-Avidan, Florentin and Ostrover. The results are new and interesting even for the special case of the standard Gaussian measure.
\end{abstract}
\tableofcontents
\nnfootnote{Keywords: Brunn-Minkowski theory, surface area, Gaussian measure, zonoids, mixed volumes, mixed measures

Mathematics Subject Classification 2020 - Primary: 52A20 and 52A21, Secondary: 46T12
}

\newpage
\section{Introduction}

\subsection{Background}

The study of convex bodies (compact, convex sets in $\R^n$ with non-empty interior) goes back over one hundred years, to the works of Minkowski \cite{Min1903}, Fenchel \cite{Fen36}, and Aleksandrov \cite{Ale38}, among others. One of the core theories in this study is the \textit{Brunn-Minkowski theory}, which focuses on the interaction of the volume of convex bodies and their Minkowski sums. The Brunn-Minkowski theory is thoroughly detailed in the textbook of Schneider \cite{Sch14:book}, and we will make frequent reference to it. For compact sets $K,L\subset\R^n,$ their \textit{Minkowski sum}, or just sum, is precisely $K+L=\{a+b:a\in K,b\in L\}.$ Denoting by $\vol_n$ the Lebesgue measure on $\R^n,$ the \textit{Brunn-Minkowski inequality} states that for $t\in(0,1)$ and compact, convex sets $K$ and $L$
$$\vol_n((1-t)K+t L)^{1/n}\geq (1-t)\vol_n(K)^{1/n}+t\vol_n(L)^{1/n},$$
with equality if, and only if, $K$ and $L$ are homothetic, i.e. $K=aL+b$ for some $a\in \R, b\in \R^n.$ 
One says that $\vol_n$ is $1/n$-concave with respect to Minkowski summation; more generally, a function is $\alpha$-concave for $\alpha>0$ if $f^{\alpha}$ is concave.
Since a weighted arithmetic mean always dominates a weighted geometric mean, one obtains that the volume is also log-concave:
$$\vol_n((1-t)K+t L)\geq \vol_n(K)^{1-t}\vol_n(L)^{t},$$
and, in fact, this is equivalent to the Brunn-Minkowski inequality.

It has been of significant interest to understand convex geometry as being embedded in some more general analytical framework. One such program, dubbed the ``geometrization of probability'' program by V.~Milman, has seen two complementary approaches adopted-- involving log-concave functions (see, e.g., \cite{KM05, Col17}) and log-concave measures (see, e.g., \cite{BM11:cras, MMX17:0}). In another direction, the fact that the Brunn-Minkowski inequality is intimately related to the interaction between the metric and the canonical measure on a Euclidean space has led to vast generalization in the theory of metric measure spaces and their synthetic geometry (see, e.g., \cite{Vil09:book}).


The goal of this work is to explore yet another analytic generalization of Brunn-Minkowski theory-- namely, how other measures on $\R^n$ (specifically, measures with certain concavity properties akin to those possessed by the Lebesgue measure) interact with Minkowski sums of convex bodies, in what we call the \textit{weighted Brunn-Minkowski Theory}. As an example, we will consider \textit{log-concave} measures, where $\mu$ is log-concave if, for any $t\in[0,1]$ and for any compact $K$ and $L$, 
$$\mu((1-t) K +tL)\geq\mu(K)^{1-t}\mu(L)^{t}.$$
The inequality of Pr\'ekopa-Leindler \cite{Pre73, Lei72a, Lei72b} in conjunction with a result of Borell \cite{Bor75a} classifies the log-concave measures by showing that a measure is log-concave if, and only if, its density is a log-concave function on its support. The standard Gaussian measure on $\R^n,$ which is given by $$d\gamma_n(x)=\frac{1}{(2\pi)^{n/2}}e^{-|x|^2/2}dx,$$
where $|\cdot|$ denotes the standard Euclidean norm, is such a measure.

Now that we have given an example of a measure with a concavity property, we discuss some concepts from the Brunn-Minkowski theory and their weighted analogues; several of these will be new. Many of the analogues hold for measures with a concavity property in a very general sense, as explored later in this paper. However, for the sake of readability, we focus on the Gaussian measure in the introduction.

\subsection{Representation formulae for mixed measures}

Let us review some well known facts from convex geometry, which may be found in the textbook of Schneider \cite{Sch14:book}. Steiner's formula states that the volume of the Minkowski sum of two compact, convex sets can be expanded as a polynomial of degree $n$: for every $t \geq 0$, convex body $K$ and compact, convex set $L$, both in $\R^n,$ one has
$$\vol_n(K+tL)=\sum_{j=0}^n \binom{n}{j}t^j V(K[n-j],L[j]),$$
where $V(K[n-j],L[j])$ is the \textit{mixed volume} of $(n-j)$ copies of $K$ and $j$ copies of $L$. When $j=1$, one often writes $V(K[n-1],L)$. By taking the derivative, one obtains
\begin{equation}
	    V(K[n-1],L):=\frac{1}{n}\lim_{\epsilon\to0}\frac{\vol_n(K+\epsilon L)-\vol_n(K)}{\epsilon}=\frac{1}{n}\int_{\s^{n-1}}h_L(u)dS_K(u),
	    \label{eq:mixed_0}
\end{equation}
where $h_L(x)=\sup_{y\in L}\langle y,x\rangle$ is the support function of $L,$ and $S_K$ is the \textit{surface area measure of $K$}. We will discuss the formal definition of the surface area measure below (see Section~\ref{ss:mixed-pty}); essentially, if the Gauss map $n_K:\partial K\to \s^{n-1}$ associates a vector in the boundary $\partial K$ of $K$ with its outer unit normal on the unit sphere ($\s^{n-1}$), then $S_K$ is a Borel measure on the sphere induced by the Gauss map.  Let us mention here that $C^2_+$ convex bodies are those with positive curvature and $C^2$ support function.

The first step in a weighted Brunn-Minkowski theory is to generalize mixed volumes. Since \eqref{eq:mixed_0} has nothing to do with the concavity of the volume, when given an arbitrary Borel measure $\mu,$ and Borel sets $K$ and $L$, the $\mu$-mixed measure of $K$ and $L$ can be defined as
	\begin{equation}
\mu(K;L)=\liminf_{\epsilon\to0}\frac{\mu(K+\epsilon L)-\mu(K)}{\epsilon},
	    \label{eq:arb_mixed_0}
	\end{equation}
when the $\liminf$ is finite. Heuristically, if the limit exists, this is precisely the first coefficient in the Taylor series expansion of $\mu(K+tL)$ (in the variable $t$). This terminology was introduced by Livshyts in \cite{Liv19}, and has been used in other works recently, see e.g. \cite{Liv19,Hos21,LRZ21,KL21:1}. It has appeared previously in many works without being explicitly given the name mixed measures, see e.g. \cite{Naz03,MR14,KM18:1,CENV04,ST74}. For Borel sets $K$ and $L$ containing the origin with finite $\mu$ measure, the limit exists when $\mu$ has continuous density. If $\lambda_n$ denotes the Lebesgue measure, then \eqref{eq:arb_mixed_0} is consistent with mixed volumes up to a factor $n$ i.e. $\lambda_n(K; L)=nV(K[n-1],L)$. 

We emphasize that we deliberately avoid using the notation $\mu(K[n-1], L)$, which some authors have used for $\mu(K;L)$ in the past to allude to the notation for mixed volumes, because the dimension $n$ plays a distinctive role only in the case of the volume (more precisely, the volume of a Minkowski sum of convex sets is a homogeneous polynomial of degree $n$, but this polynomiality generally fails for other measures, where the best we can do is look for coefficients of a putative power series expansion).

One would like to prove an integral representation of mixed measures. The first step, therefore, is to introduce weighted surface area measures. As usual, $\mathcal{H}^{n-1}$ will denote the $(n-1)$-dimensional Hausdorff measure of a given surface. However, for brevity, we may abuse notation to write $dx$ for $d\mathcal{H}^{n-1}(x)$; this is to be understood by context. We will discuss in Section~\ref{ss:mixed-pty} the formal definition of weighted surface area; let us now just explain why it exists. Let $K$ be a convex set; for a Borel measure $\mu$ on $\R^n$ with density $\phi$, denote the $\mu$-measure of $\partial K,$ or the \textit{weighted surface area}, as 
\begin{equation}\mu^+(\partial K):=\liminf_{\epsilon\to 0}\frac{\mu\left(K+\epsilon B_2^n\right)-\mu(K)}{\epsilon}=\int_{\partial K}\phi(y)dy,
\label{eq_bd}
\end{equation}
where the second equality holds if there exists some canonical way to select how $\phi$ behaves on $\partial K$. A large class of functions consistent with \eqref{eq_bd} is when $\phi$ is continuous. That is, if $\mu$ has continuous density $\phi$ on $\R^n$, then it will induce a Borel measure on $\partial K$ which has density $\phi$ with respect to the $(n-1)$-dimensional Hausdorff measure on $\partial K.$ Therefore, one can define the \textit{weighted surface area measure}, denoted $S^{\mu}_{K},$ for any Borel measure $\mu$ on $\R^n$ with continuous density $\phi$ via the Riesz-Markov-Kakutani representation theorem, since, for a continuous $f\in \mathcal{C}(\s^{n-1}),$
$$f\mapsto \int_{\partial K}f(n_K(y))\phi(y)dy$$ 
is a positive linear functional. That is, by definition $S^{\mu}_{K}$ satisfies the following change of variables formula:
$$\int_{\partial K}f(n_K(y))\phi(y)dy=\int_{\s^{n-1}}f(u)dS^{\mu}_{K}(u).
$$

Notationally, one has $S^{\mu}_{K}(\mathbb{S}^{n-1})=\mu^+(\partial K).$ An early work that used $\mu^{+}$ for the weighted surface area is K. Ball's work \cite{Bal93} on Gaussian measure $\gamma_n$. For a compact set $K$, 
$$
\gamma_n^+(\partial K):=\liminf_{\epsilon \to 0} \frac{\gamma_n(K+\epsilon B_2^n)-\gamma_n(K)}{\epsilon}=(2\pi)^{-n/2}\int_{\partial K}e^{-|x|^2/2}dx.
$$
K. Ball showed \cite{Bal93} that for a compact, convex set $K,$ one has $\gamma_n^+(\partial K) \leq 4n^{1/4},$ and F. Nazarov  proved \cite{Naz03} that this bound is asymptotically sharp. Livshyts obtained bounds similar to that of K.~Ball for other classes of rotational invariant log-concave measures \cite{Liv13,Liv14,Liv15,Liv21}.

With this rigorous definition of weighted surface area available, one can prove (see \cite{KL21:2}), using Lemma~\ref{l:second} below,  the following integral representation of mixed measures by setting $f=h_L$ for some compact, convex set $L\subset \R^n$ (see \cite{Liv19} for an alternative proof). 
\begin{prop}[Representation of Mixed Measures]
	Let $L$ be a compact, convex set and $K$ a convex body with the origin in its interior in $\R^n$. Suppose $\mu$ is a Borel measure on $\R^n$ with continuous density. Then, 
	\begin{equation}
	    \mu(K;L)=\int_{\s^{n-1}}h_L(u)dS^{\mu}_{K}(u).
	    \label{eq:arb_mixed}
	\end{equation}
\end{prop}
\noindent In general $\mu(K)\neq \mu(K;K)$, unlike in the volume case (though see the beginning of Section~\ref{ss:mixed-pty} for a different relation between these quantities). 

It turns out there exists also a Steiner formula for  $\vol_n(K+t_1L_1+t_2L_2);$ the coefficient of $t_1t_2$ in the corresponding polynomial expansion is the mixed volume of $(n-2)$-copies of $K,$ one copy of $L_1,$ and one copy $L_2$ and is given by
\begin{equation}
    V(K[n-2],L_1,L_2)=\frac{1}{n}\int_{\s^{n-1}}h_{L_2}(u)dS_{K[n-2],L_1}(u)=\frac{1}{n}\int_{\s^{n-1}}h_{L_1}(u)dS_{K[n-2],L_2}(u),
    \label{eq:mixed_volume}
\end{equation}
where $dS_{K[n-2],L_1}(u)$ and $dS_{K[n-2],L_2}(u)$ are the \textit{mixed area measures}, and the second equality is to emphasize that the mixed volume is invariant under permutation of the compact, convex sets $L_1$ and $L_2$. The surface area measure of the Minkowski sum of compact, convex sets is related to the mixed area measures via a polynomial structure in the same way that volume and mixed volume are related (see \cite[Section 5.1]{Sch14:book}).

Our first (new) contribution to what we call the weighted Brunn-Minkowski theory is the weighted analogue of \eqref{eq:mixed_volume}. 

\begin{defn}
\label{def:mixed}
Let $\mu$ be a Borel measure on $\R^n$. Then, for Borel sets $A,B,C\subset\R^n$ with finite $\mu$-measure, the mixed measure of $(n-2)$ copies of $A$, one copy of $B$ and one copy of $C$ is given by
$$\mu(A;B,C)=\pdv{}{s,t}\mu(A+sB+tC)(0,0)$$
whenever the mixed derivative exists.
\end{defn}

In general $\mu(K)\neq \mu(K;K,K)\neq \mu(K;K)$. However, from the Schwarz theorem of standard multi-variable calculus, one has that $\mu(A;B,C)=\mu(A;C,B)$. For the Lebesgue measure, one has 
$$\lambda_n(K;L,M)=n(n-1)V(K[n-2],L,M).$$ 

We will show below in Remark~\ref{r:mixed_exist} that, if $A$ is a convex body, $B$ and $C$ are compact, convex sets and $\mu$ has a $C^2$ smooth density, then the mixed derivative exists. Furthermore, if $A$ is of the class $C^2_+,$ then we obtain an integral formula for $\mu(A;B,C)$. We state the result for the standard Gaussian measure here.

\begin{thm}
If $A$ is a convex body of the class $C^2_+,$ 
and $B,C$ are compact convex sets in $\R^n$, then 
\begin{equation}\label{eq:mixed_gaussian}
\begin{split}(2\pi)^{n/2}\gamma_n(A;B,C)&=(n-1)\int_{\s^{n-1}}e^{-|\nabla h_{A}(u)|^2/2}h_C(u)dS_{A[n-2],B[1]}(u)
\\
&-\int_{\s^{n-1}}\langle \nabla h_A(u), \nabla h_B(u) \rangle h_C(u)e^{-|\nabla h_{A}(u)|^2/2}dS_A(u).
\end{split}
\end{equation}    
\end{thm}

Our first main result, Theorem~\ref{t:double_deriv}, is a generalization of this formula for more general Borel measures with smooth densities. 

As an example of how this quantity differs from the volume case, let $B=[-\xi,\xi]$ for some $\xi\in \s^{n-1}.$ Then $h_B(u)=|\langle \xi,u \rangle|$ yields $\nabla h_B(u) \overset{\text{a.e.}}{=}\text{sgn}{\langle \xi,u \rangle}\xi$, where $\overset{\text{a.e.}}{=}$ denotes equality up to a set of zero volume and $\text{sgn}(a)=a/|a|$ for $a\in\R\setminus\{0\}$ and $\text{sgn}(0)=0.$ Therefore, we see from \eqref{eq:mixed_gaussian} that
\begin{equation}
    (2\pi)^{n/2}\gamma_n(A;[-\xi,\xi],[-\xi,\xi])=-\int_{\s^{n-1}}\langle \nabla h_A(u), \xi \rangle \langle\xi,u \rangle e^{-|\nabla h_{A}(u)|^2/2}dS_A(u),
\end{equation}
in contrast with the volume case where 
$V(A[n-2],[-\xi,\xi],[-\xi,\xi])=0.$

\subsection{Weighted versions of Minkowski's inequalities}

The Brunn-Minkowski inequality implies various inequalities for the mixed volumes. Minkowski's first and second inequality for volume state that, for compact, convex sets $K$ and $L$ in $\R^n$,
\begin{equation}
    V(K [n-1], L)^{n} \geq \vol_n(K)^{n-1} \vol_n(L),
    \label{eq:first_in}
\end{equation}
and
\begin{equation}
\label{eq:second_in}
V(K[n-1], L)^{2} \geq \vol_n(K) V(K[n-2], L [2]).
\end{equation}
Equality holds in inequality~\eqref{eq:first_in} if and only if $K$ and $L$ are homothetic. We remark that $\vol_n(K)=V(K[n-1],K)=V(K[n-2],K[2])$. Minkowski's second inequality is merely a special case of \textit{Minkowski's quadratic inequality}:  for $A,B,C$ compact, convex sets, one has
$$V(A[n-2],B,C)^2 - V(A[n-2],B,B)V(A[n-2],C,C) \geq 0.$$
This, in turn, is a special case of the Aleksandrov-Fenchel inequality (which we do not discuss). We do mention however that despite having existed in the literature for over a century, the full equality conditions to Minkowski's second and quadratic inequalities were only established very recently by R. van Handel and Y. Shenfeld \cite{SH21}.

In Theorem~\ref{t:first_second}, we generalize the inequalities \eqref{eq:first_in} and \eqref{eq:second_in}. We only discuss the Gaussian case here; the general results can be found in Section~\ref{sec:mixed}. The log-concavity of $\gamma_n$ over the set of Borel sets yields for such $K$ and $L$ that the following Minkowski's first inequality for the Gaussian measure holds:
$$\gamma_n(K;L) - \gamma_n(K;K) \geq \gamma_n(K)\log\left(\frac{\gamma_n(L)}{\gamma_n(K)}\right),$$
with equality if, and only if, $K=L.$ This had been obtained previously in \cite{Liv19}.

However, the Gaussian measure actually has other types of concavity than just log-concavity. Indeed, the Ehrhard inequality states for $0\le t\le 1,$ Borel sets $K$ and $L$ in $\R^{n}$, and the Gaussian measure $\gamma_n$:
\begin{equation}\label{e:Ehrhard_ineq}
\Phi^{-1}\left(\gamma_{n}((1-t) K+tL)\right) \geq(1-t) \Phi^{-1}\left(\gamma_{n}(K)\right)+t \Phi^{-1}\left(\gamma_{n}(L)\right),
\end{equation}
i.e. $\Phi^{-1}\circ\gamma_n$ is concave, where $\Phi(x)=\gamma_{1}((-\infty, x))$. It was first proven by Ehrhard for the case of two closed, convex sets \cite{Ehr83}.  Lata{\l}a \cite{Lat96} generalized Ehrhard's result to the case of an arbitrary Borel set $K$ and a convex set $L$; the general case for two Borel sets of the Ehrhard's inequality was proven by Borell \cite{Bor03}. Since $\Phi$ is log-concave, the the Ehrhard inequality is strictly stronger than the log-concavity of the Gaussian measure, and yields the following analogue of Minkowski's first inequality for Gaussian measure.

\begin{thm}
For Borel sets $K$ and $L$ in $\R^n$, we have 
\begin{equation}\gamma_n(K;L) - \gamma_n(K;K) \geq \sqrt{\frac{1}{2\pi}}e^{-\left(\frac{\Phi^{-1}\left(\gamma_n(K)\right)^2}{2}\right)}\left[\Phi^{-1}(\gamma_n(L))-\Phi^{-1}(\gamma_n(K))\right].
\label{eq:Ehrhard_mixed}
\end{equation}
\end{thm}

The Gaussian measure satisfies other types of concavity if one restricts the sets under consideration. Kolesnikov and Livshyts showed that the Gaussian measure is $\frac{1}{2n}$-concave on the class of convex bodies containing the origin in their interior \cite{KL21:1}. If one further restricts the admissible sets, one can do even better. A compact, convex set $K$ is said to be symmetric if $K=-K.$ Gardner and Zvavitch \cite{GZ10} conjectured that, for symmetric convex bodies $K$ and $L$ and $t\in[0,1]$,
\begin{equation}\label{e:gamma_gaussian}
    \gamma_n\left((1-t) K + t L\right)^{1/n}\geq (1-t)\gamma_n(K)^{1/n} + t \gamma_n(L)^{1/n},
\end{equation}
i.e. $\gamma_n$ is $1/n$-concave over the class of symmetric convex bodies.  An example given in \cite{NT13:1} shows that assumption on $K$ and $L$ having some symmetry is necessary. Important progress was made in \cite{KL21:1}, which led to the proof of the inequality \eqref{e:gamma_gaussian} by Eskenazis and Moschidis in \cite{EM20:3} for symmetric convex bodies. Using this, we obtain the following analogue of Minkowski's first inequality for Gaussian measure when we restrict to symmetric convex bodies.

\begin{thm}\label{thm:gau-symm-mink1}
For symmetric convex bodies $K$ and $L$ in $\R^n$, one has
\begin{equation}
    \label{mixed_gz_gaussian}
    \gamma_n(K;L) - \gamma_n(K;K) \geq n\gamma_n(L)^{1/n}\gamma_n(K)^{\frac{n-1}{n}} - n\gamma_n(K),
\end{equation}
with equality if, and only if, $K=L$.
\end{thm}

Notice that \eqref{mixed_gz_gaussian} is very similar to Minkowski's first inequality \eqref{eq:first_in}. Unfortunately, one cannot improve \eqref{mixed_gz_gaussian} to obtain \eqref{eq:first_in} for the Gaussian measure; we show below that for a convex body $K$ containing the origin in its interior, one always has $\gamma_n(K;K)=n\gamma_n(K) - \int_K|x|^2d\gamma_n(x) < n\gamma_n(K).$  Both \eqref{eq:Ehrhard_mixed} and \eqref{mixed_gz_gaussian} imply an isoperimetric-type result: if $\gamma_n(K)=\gamma_n(L),$ then $\gamma_n(K;L) \geq \gamma_n(K;K).$

Define the following class of Borel measures
\begin{equation}
\label{eq:best_meas}
\begin{split}
    \mathcal{M}:=\left.\{\right.&\mu \text{ Borel measure on }\!\R^n\!:d\mu=e^{-W(|x|)}, 
    \\
    &\left. W:(0,\infty)\to(-\infty,\infty], t\mapsto W(e^t) \text{ is convex}\right\}.
    \end{split}
\end{equation}
This class contains every rotational invariant, log-concave measure as well as the Cauchy measures.
Recently, Cordero-Erasquin and Rotem \cite{CR22} extended the result by Eskenazis and Moschidis to every measure $\mu\in\mathcal{M},$ i.e. every Borel measure $\mu\in\mathcal{M}$ is $1/n$-concave over the same class of symmetric convex bodies. Thus the analogue of Minkowski's first inequality contained in Theorem~\ref{thm:gau-symm-mink1} actually extends to all $\mu\in\mathcal{M}$, which is the content of Theorem~\ref{t:GZCER}.

We, as mentioned, also obtain Minkowski's second inequality for $\mu(A;B,C).$ We present here the case of the Gaussian measure using \eqref{e:gamma_gaussian}; the reader can deduce from the result in Theorem~\ref{t:first_second} other such inequalities for the Gaussian measure and other measures with concavity.

\begin{thm}
    Let $K$ and $L$ be symmetric convex bodies in $\R^n$. Then,
    $$\left(\gamma_n(K;L)-\gamma_n(K;K)\right)^2\geq\frac{n}{n-1} \gamma_n(K)\bigg(\gamma_n(K;L,L)-2\gamma_n(K;K,L)+\gamma_n(K;K,K)\bigg).$$
\end{thm}

\subsection{Bezout-type inequalities and local log-submodularity}

Despite being a very old tool, new facts about the volume of Minkowski sums and mixed volumes are still being discovered. One such area of interest is the study of the reverse of Minkowski's quadratic inequality, in what is known as \textit{B\'ezout-type inequalites}. More precisely, if $\mathcal{C}$ is a class of compact, convex sets closed under Minkowski summation, and given a fixed compact, convex set $A\in\mathcal{C}$, what is the smallest constant $\mathcal{B}_{\mathcal{C}}(A)$ such that every $B,C\in\mathcal{C}$ the following inequality holds
\begin{equation*}
\vol_n(A)V(A[n-2],B,C) \leq \mathcal{B}_{\mathcal{C}}(A)V(A[n-1],B)V(A[n-1],C) ?
\end{equation*}
One then sets $\mathcal{B}_\mathcal{C}=\sup_{A\in\mathcal{C}}\mathcal{B}_\mathcal{C}(A)$. Let $\mathcal{K}^n$ denote the class of all compact, convex subsets of $\R^n$. Then, \textit{Fenchel's inequality} is precisely that $\mathcal{B}_{\mathcal{K}^n} = 2.$ This inequality was first established by Fenchel \cite{Fen36}, and a more accessible proof is in \cite{FGM03}. However, both those proofs use the Aleksandrov-Fenchel inequality. In the sequel to this work \cite{FLMZ23_2}, we establish Fenchel's inequality directly from the Brunn-Minkowski inequality and the limit definition of mixed volumes.

The study of these inequalities started with the B\'ezout inequality for mixed volume, which asserts that $\mathcal{B}_{\mathcal{K}^n}(\triangle_n)=1,$ where $\triangle_n$ is the regular $n$-dimensional simplex. It was conjectured by Soprunov and the last named author \cite{SZ16} that this characterizes the simplex, and this conjecture was confirmed in $\R^2,\R^3$ and when $B,C\in\mathcal{P}^n,$ the class of polytopes in $\R^n$ \cite{SSZ16,SSZ18}. Recent progress has been made towards this conjecture \cite{MS:23_1,MS:23_2}. 

It turns out that showing $\mathcal{B}_{\mathcal{C}}=\frac{n}{n-1}$ is equivalent to volume being log-submodular over that class, which has gained a lot of interest as of late \cite{FMZ22, FMMZ22, BM12:jfa}. This special case is therefore sometimes known as \textit{local log-submodularity} since it follows from differentiation; that is, volume is local log-submodular over a class $\mathcal{C}$ of compact, convex sets if for every $A,B,C\in\mathcal{C}$ one has

\begin{equation}
V(A[n-1],B)V(A[n-1],C)\geq \frac{n-1}{n}\vol_n(A)V(A[n-2],B,C).
\label{eq:AFZ}
\end{equation}

 A \textit{centered zonotope} $Z$ is the Minkowski sum of symmetric line-segments, i.e. it can be written in the form
\begin{equation}
    Z=\sum_{i=1}^ma_i[-u_i,u_i], \; u_i\in\s^{n-1}, \; a_i\in\R.
    \label{eq:zon}
\end{equation}
Furthermore, a \textit{centered zonoid} is the limit, with respect to the Hausdorff metric, of a sequence of centered zonotopes; $\mathcal{Z}^n$ denotes the set of centered zonoids in $\R^n.$ A zonoid (resp. zonotope) is merely a translation of a centered zonoid (resp. zonotope). Due to the translation invariance of the Lebesgue measure, all mentioned results that hold for centered zonoids hold for zonoids; as we will see, the distinction becomes crucial in the weighted case.

A subset of the authors, working with Meyer, showed that $\mathcal{B}_{\mathcal{Z}^n}=\frac{n}{n-1}$ in $\R^2$ and $\R^3$ \cite{FMMZ22}; the $2$-dimensional case follows from \cite{SZ16}. Since every symmetric convex body in $\R^2$ is a zonoid, this means that \eqref{eq:AFZ} holds for all $A,B,C\in\mathcal{K}^2$ \cite{SZ16}. Prior to this work, the case where $A=B_2^n$ and $B,C\in\mathcal{Z}^n$ was established by Hug and Schneider \cite{HS11:1}. Artstein-Avidan, Florentin, and Ostrover \cite{AFO14} extended this result to the case where $A=B_2^n,$ $B\in\mathcal{Z}^n$ and $C\in\mathcal{K}^n$. In fact, they showed the following sharper inequality, with $\kappa_n=\vol_n(B_2^n)$ and $B=Z$:
\begin{equation}
\label{eq:sharp_af}V(B_2^n[n-1],Z)V(B_2^n[n-1],C)\geq \frac{n-1}{n}\frac{\kappa^2_{n-1}}{\kappa_{n-2}\kappa_n}\vol_n(A)V(A[n-2],Z,C),
\end{equation}
which is sharper since $$1\leq \frac{\kappa^2_{n-1}}{\kappa_{n-2}\kappa_n}\leq 1+\frac{1}{n-1}.$$ Unfortunately, since these results hold only for the fixed body $A=B_2^n,$ the equivalence between local log-submodularity and log-submodularity does not hold. If we replace mixed volumes with mixed measures when the measure $\mu$ is the Lebesgue measure $\lambda_n,$ \eqref{eq:sharp_af} becomes
\begin{equation}
    \label{eq:sharp_mixed}
    \lambda_n(B_2^n;Z)\lambda_n(B_2^n;C)\geq \frac{\kappa^2_{n-1}}{\kappa_{n-2}\kappa_n}\vol_n(A)\lambda_n(A;Z,C).
\end{equation}

As an application of our formulas for mixed measures, we extend \eqref{eq:sharp_mixed} to the setting of rotational invariant log-concave measures in Theorem~\ref{t:main} below, and this result reduces directly to \eqref{eq:sharp_mixed} when the measure is set to be the Lebesgue measure. However, an interesting phenomenon occurs; due to the fact that general log-concave measures are not necessarily homogeneous, we replace $B_2^n$ with $RB_2^n$, $R>0,$ and, as a consequence of the proof of Theorem~\ref{t:main}, we can obtain a constant that is monotonically increasing in $R,$ whose minimum value is obtained as $R\to0^+$;  as $R\to0^+,$ the constant reduces to the same constant from the volume case (see \eqref{eq:sharper_constant}). That is, for every $R>0,$ we obtain an inequality sharper than the volume case. However, we can do even better. If one knows a bit about the structure of a given log-concave measure, the constant can be further improved, see Theorem~\ref{t:p}. The Gaussian measure is a special case, see Corollary~\ref{cor:gaf}. That being said, if we know from the beginning we are working with the Gaussian measure and the unit Euclidean ball, we obtain this final result. 
\begin{thm}
\label{t:sharp}
Fix $n\geq 2$. Let $Z$ be a centered zonoid in $\R^n$ and $C$ a compact, convex set in $\R^n$. Then, one has
$$\gamma_n(B_2^n;Z)\gamma_n(B_2^n;C)\geq e^{-\frac{(2n+1)}{2(n+1)^2}}\frac{n}{n-1} \frac{\kappa^2_{n-1}}{\kappa_{n-2}\kappa_n} \gamma_n(B_2^n)\gamma_n(B_2^n;Z,C).$$
\end{thm}

This paper is organized as follows. Section~\ref{sec:mixed} is dedicated to deriving some basic of properties of mixed measures, explored further in the sequel \cite{FLMZ23_2} (in Section~\ref{sec:pmm}), deriving formulas for mixed measures (in Section~\ref{ss:mixed-pty}) and exerting some effort to further establish the theory in the case of the Gaussian measure in the plane (Section~\ref{sec:Gauss_plane}). In Section~\ref{llog_sub}, we establish, as an application of our  formulas, a Gaussian counterpart to a reverse Aleksandrov-Fenchel type inequality originally done in the volume case by Artstein-Avidan, Florentin, and Ostrover \cite{AFO14}. Finally, in Section~\ref{sec:con_re}, we list some concluding remarks concerning connections between the measure of Minkowski sums of compact sets and mixed measures, which are explored in the companion paper \cite{FLMZ23_2}.

\section{Mixed Measures and Weighted Surface Area}
\label{sec:mixed}

\subsection{Properties of Mixed Measures}
\label{sec:pmm}
In this section, we establish some properties for mixed measures.  For a convex set $K$ containing the origin, notice that, for $t\in [0,1]$, \eqref{eq:arb_mixed_0} yields $\mu(tK;K)=\odv{}{t}\mu(tK),$ where the limit exists almost everywhere since the function $\mu(tK)$ is monotonic in $t.$ Consequently, integrating yields
\begin{equation}
    \mu(K)=\int_0^1\mu(tK;K)dt.
    \label{eq:measure_mixed_relate_0}
\end{equation}

We also note, by writing out the limit definition of the derivative, that for a Borel measure $\mu$ on $\R^n$ and Borel sets $A,B$ and $C$ in $\R^n$ such that $\mu(A;B,C)$ from Definition~\ref{def:mixed} exists, one has
\begin{equation}
    \mu(A;B,C)=\lim_{s\to 0}\frac{\mu(A+sB;C)-\mu(A;C)}{s}.
    \label{eq:limit_def_alt}
\end{equation}
In particular, one sees that if $A$ is convex
$$\mu(tA;A,C)=\odv{\mu(tA;C)}{t}.$$
We therefore deduce that
\begin{equation}
    \mu(A;C)=\int_0^1\mu(tA;A,C)dt.
    \label{eq:int_formula_double_mixed}
\end{equation}

\noindent Notice that if $A$ and $B$ are convex, then
\begin{align}
    \mu(A;B,B) &= \pdv{}{s,t}\mu(A+sB+tB)(0,0) \nonumber \\&=\pdv{}{s,t}\mu(A+(s+t)B)(0,0)=\odv[2]{}{s}\mu(A+sB)\big|_{s=0}. \label{eq:2BorNot}
\end{align}

\subsection{Weighted Surface Area and Mixed Measure of two bodies}
\label{ss:mixed-pty}

The formal definition of the surface area measure is the following, for $K\subset \R^n$ a compact, convex set: 
\begin{equation}
    \label{eq:surface}
    S_{K}(E)=\int_{n_K^{-1}(E)}d\mathcal{H}^{n-1}(x)
\end{equation}
for every Borel measure $E \subset \s^{n-1},$ where $\mathcal{H}^{n-1}$ is the $(n-1)$ dimensional Hausdorff measure.
Here, the Gauss map $n_K:\partial K \to \s^{n-1}$ associates an element on the boundary of $K$ with its outer unit normal. The Gauss map is unique for almost all $x\in\partial K.$ A convex body is said to be \textit{strictly convex} if $\partial K$ does not contain a line segment, in which case the Gauss map is an injection between $\partial K\setminus \{x: n_K(x) \text{ is not unique}\}$ and $\s^{n-1}$. The Gauss map is related to the support function: fix $u\in\s^{n-1}$. Then, $\nabla h_K(u)$ exists if, and only if, $n_K^{-1}(u)$ is a single point $x\in\partial K,$ and, furthermore, $\nabla h_K(u)=x=n^{-1}_K(u)$ \cite[Corollary 1.7.3]{Sch14:book}. Hence, $K$ is strictly convex if, and only if, $h_K\in C^1$ \cite[Page 115]{Sch14:book}. 

If a convex body $K$ has positive radii of curvature everywhere, we say $K$ has \textit{positive curvature}. In this instance, there exists a continuous, strictly positive function $f_K(u)$, the \textit{curvature function} of $K$, such that one has $dS_K(u)=f_K(u)du$. It is standard to denote strictly convex bodies with positive curvature and twice differentiable support functions as being of the class $C^2_+.$ From \cite[Theorem 2.7.1]{Sch14:book}, every compact, convex set can be uniformly approximated by convex bodies that are $C^2_+.$ Next, we define the weighted surface area measure of a Borel measure $m$ defined on the boundary of a convex body $K$.

\begin{defn}
\label{def:surface_mu}
	For a compact, convex set $K\subset \R^n$ and a Borel measure $m$ on $\partial K$,  the $m$-surface area of $K$ is the pushforward of $m$ by $n_K:\partial K\to \s^{n-1}$ (i.e., $S^{m}_K=n_{K} \,\sharp\, m$). In the case where $m$ has a density $\phi$, then
\begin{equation}
    \label{eq:surface_mu}
    S^{m}_K(E)=m(n_K^{-1}(E))=\int_{n_K^{-1}(E)}\phi(x)d\mathcal{H}^{n-1}(x)
\end{equation}
for every Borel measurable $E \subset \s^{n-1}.$ If $K$ is $C_+^2$ then $dS^{m}_K(u)=\phi\left(n_K^{-1}(u)\right)f_K(u)du.$
\end{defn}
As discussed in the introduction, given a Borel measure $\mu$ on $\R^n$ with continuous density, there exists a canonical way to select how it behaves on $\partial K.$ Therefore, the measure $S^{\mu}_{K}$ satisfies \eqref{eq:surface_mu} when $m$ is identified with $\mu$. In other words, $S^{\mu}_{K}$ as shown to exist in the introduction is the same as $S^{m}_K$ as defined in  Definition~\ref{def:surface_mu} with $dm=\phi d\mathcal{H}^{n-1}$. 

Recalling that $\mu(tK;K)=\odv{}{t}\mu(tK)$ when $K$ is a convex set, we can use \eqref{eq:arb_mixed} in conjunction with \eqref{eq:measure_mixed_relate_0} when $K$ is a convex body to obtain
\begin{equation}
    \mu(K)=\int_0^1\int_{\s^{n-1}}h_K(u)dS^{\mu}_{tK}(u).
    \label{eq:measure_mixed_relate}
\end{equation}
Note that if $K$ is of class $C^2_+$ with the origin in its interior, then \eqref{eq:arb_mixed} becomes
$$\mu(K;L)=\int_{\s^{n-1}}h_L(u)\phi(n_K^{-1}(u))f_K(u)du,$$
and \eqref{eq:measure_mixed_relate} becomes
\begin{equation}
    \mu(K)=\int_{\s^{n-1}}h_K(u)f_K(u)\int_0^1t^{n-1}\phi\left(tn^{-1}_K(u)\right)dtdu.
    \label{eq:measure_mixed_relate_C2}
\end{equation}
We now show that if a measure $\mu$ has radially decreasing density (where $\varphi$ is said to be radially non-increasing if $\varphi(tx)\geq \varphi(x)$ for every $t\in[0,1]$), then one can relate $\mu(K;K)$ and $\mu(K).$ In the proposition below, $(0,y]$ denotes the line segment from the origin to a vector $y,$ which does not contain the origin.
\begin{prop}
\label{p:meas_relate}
    Let $\mu$ be a Borel measure on $\R^n$ with radially non-increasing density $\varphi.$ Then, for every convex body $K$ in $\R^n$ containing the origin in its interior such that $\varphi$ is defined on $\partial K$, one has
    $$n\mu(K) \geq \mu(K;K),$$
    with equality if, and only if, for almost every $y\in \partial K,$ $\varphi$ is a constant almost everywhere on $(0,y].$
\end{prop}
\begin{proof}
    The result follows from formula \eqref{eq:measure_mixed_relate}, as applying the change of variable formula satisfied by $S^\mu_{tK}$ yields
    $$\mu(K)=\int_0^1\int_{\partial tK}h_K(n_{tK}(y))\varphi(y)dydt=\int_0^1t^{n-1}\int_{\partial K}h_K(n_{K}(y))\varphi(ty)dydt,$$
    where, in the second step, a variable substitution $y\to ty$ was done. Next, Fubini's theorem yields
    $$\mu(K)=\int_{\partial K}h_K(n_{K}(y))\int_0^1t^{n-1}\varphi(ty)dtdy.$$
    The hypothesis that $\varphi$ is radially non-increasing yields $$\int_0^1nt^{n-1}\varphi(ty)dt\ge\varphi(y);$$ this estimate, combined with another use of formula~\eqref{eq:measure_mixed_relate}, completes the proof. Equality occurs if, and only if, $\varphi(ty)$ is a constant for almost every $t\in(0,1]$. Notice that $(0,y]=\{ty:t\in(0,1]\}.$ Thus, equality implies that $\varphi$ is constant almost everywhere on $(0,y]$.
 \end{proof}
To elaborate on the equality conditions of the above proposition, it is possible that, for two different $y_1,y_2\in\partial K,$ $\varphi$ is constant on $(0,y_1]$ and $(0,y_2]$, but the value of $\varphi$ on each segment is different. That is, equality occurs if, and only if, $\varphi$ is the $0$-homogeneous extension of a function on $\partial K$. 

Proposition~\ref{p:meas_relate} implies that $n\gamma_n(K)> \gamma_n(K;K).$ However, we can do better in this case. We will use the notation $\Delta f$ for the Laplacian of a twice-differentiable function $f.$
\begin{prop}
Let $K$ be a convex body containing the origin in its interior. Then, $$\gamma_n(K;K)=n\gamma_n(K)-\int_{K}|x|^2d\gamma_n(x).$$
\end{prop}
\begin{proof}
    From \eqref{eq:arb_mixed} we have that
    $$\gamma_n(K;K)=\int_{\s^{n-1}}h_K(u)dS^{\gamma_n}_K(u)=(2\pi)^{-n/2}\int_{\partial K}h_K(n_K(y))e^{-|y|^2/2}dy,$$
    where the second equality follows from the change of variables formula satisfied by $S^{\gamma_n}_K.$
    However, from the convexity of $K,$ the supremum in the definition of support function will be obtained at $y,$ i.e. $h_K(n_K(y))=\langle n_K(y),y \rangle$. Let $g(y)=-e^{-|y|^2/2}.$ Then, we have
    $$\gamma_n(K;K)=(2\pi)^{-n/2}\int_{\partial K}\langle \nabla g(y),n_K(y) \rangle dy.$$
    From Green's first identity, this is
    $$\gamma_n(K;K)=(2\pi)^{-n/2}\int_{K}\Delta g(x) dx.$$
    But, $$\Delta g(x)=(n-|x|^2)e^{-|x|^2/2},$$
    and so the claim follows.
\end{proof}

Now that we have explored properties of mixed measures, we work towards our first main result, Theorem~\ref{t:double_deriv}. We recall that, for every positive $f\in C(\s^{n-1})$, the \textit{Wulff shape} of $f$ is the convex body given by
	\begin{equation}
	    [f]=\{x\in\R^n:\langle x,u\rangle\leq f(u) \; \forall u \; \in \s^{n-1}\}.
	\end{equation}
	One has, for a convex body $K$ containing the origin in its interior, $[h_K]=K.$ Since $f$ is positive, $[f]$ is such a convex body. Furthermore, if $f$ is even, then $[f]$ is symmetric. In \cite{KL21:2}, the following was shown, expanding on the results from \cite{Liv19, LMNZ17}.

\begin{lem}[Aleksandrov's variational formula for arbitrary measures]
	\label{l:second}
	Let $\mu$ be a Borel measure on $\R^n$ with locally integrable density $\phi$. Let $K$ be a convex body containing the origin in its interior, such that $\partial K$, up to set of $(n-1)$-dimensional Hausdorff measure zero, is in the Lebesgue set of $\phi$. Then, for a continuous function $f$ on $\s^{n-1}$, one has that
	$$\lim_{t\rightarrow 0}\frac{\mu([h_K+tf])-\mu(K)}{t}=\int_{\s^{n-1}}f(u)dS^{\mu}_{K}(u).$$
	\end{lem}	

\begin{rem}
Fix a convex body $K$ containing the origin in its interior, a compact, convex set $L$ and some $\lambda_0>0.$ Notice that $h_K+(\lambda+\lambda_0) h_L=h_{K+\lambda_0 L} +\lambda h_L.$ Hence, an immediate consequence of Lemma~\ref{l:second} is that
$$\lim_{t\rightarrow 0}\frac{\mu(K+(t+t_0)L)-\mu(K+t_0L)}{t}=\int_{\s^{n-1}}h_L(u)dS^{\mu}_{K+t_0 L}(u).$$
Moreover, we can also calculate the variation of the convex combination of $K$ and $L$. For a fixed $\lambda_0\in(0,1),$ write that $(1-\lambda_0)h_K+\lambda_0 h_L=h_K+\lambda_0 (h_L-h_K).$ Then, perturb $\lambda_0$ by a small $\lambda>0$ and write $h_K+(\lambda_0+\lambda) (h_L-h_K)=h_{(1-\lambda_0)K +\lambda_0 L}+\lambda (h_L-h_K)$. Hence, from Lemma~\ref{l:second}, we can conclude that
\begin{equation}
    \label{eq:convex_combo}
    \begin{split}
    \odv{}{\lambda}\mu\left((1-\lambda)K +\lambda L\right)\bigg|_{\lambda_0}&=\odv{}{\lambda}\mu\left(\left[h_{(1-\lambda_0)K +\lambda_0 L}+\lambda (h_L-h_K)\right]\right)\bigg|_{\lambda=0}
    \\
    &=\int_{\s^{n-1}}(h_L-h_K)dS^{\mu}_{K_{\lambda_0}}(u)
    \\
    &=\mu(K_{\lambda_0};L)-\mu(K_{\lambda_0};K),
    \end{split}
\end{equation}
where $K_{\lambda_0}=(1-\lambda_0)K +\lambda_0 L$ and the last equality follows from \eqref{eq:arb_mixed}. We will have occasion to use this observation later.     
\end{rem}

\subsection{Integral representation formula for $\mu(A;B,C)$}

We are now ready to obtain an integral representation for $\mu(A;B,C)$ defined in Definition~\ref{def:mixed}. For our purposes, we need only the case where $A$ is $C^2_+$. First we define the weighted analog of the mixed surface area measure $S_{A[n-2],B[1]}$, which we denote $S^\mu_{A;B}$.

\begin{defn}\label{defn:WMSAM}
Let $A$ be a $C^2_+$ convex body and $B$ be an arbitrary compact, convex set in $\R^n$, $n\geq 2$, and
$\mu$ be a Borel measure with $C^1$ density $\phi$. The  weighted mixed surface area measure $S^\mu_{A;B}$ is the signed measure on $\s^{n-1}$ defined by
$$
dS^\mu_{A;B}(u)=\phi(n^{-1}_A(u))dS_{A[n-2],B[1]}(u) + 
\frac{1}{n-1}\langle\nabla \phi(n^{-1}_A(u)),\nabla h_B(u)\rangle dS_{A}(u) .
$$
\end{defn}

Observe that if we na{\"i}vely define $d\tilde{S}^\mu_{A;B}=\phi(n^{-1}_A(u))dS_{A[n-2],B[1]}(u)$ in analogy with the weighted surface area, then we can write
\begin{equation}\label{eq:WMSAM}
dS^\mu_{A;B}(u)=d\tilde{S}^\mu_{A;B}(u)  + 
\frac{1}{n-1}\bigg\langle\frac{\nabla \phi(n^{-1}_A(u))}{\phi(n^{-1}_A(u))},\nabla h_B(u)\bigg\rangle dS^{\mu}_{A}(u) .    
\end{equation}
Clearly, when $\phi\equiv 1$, the second term vanishes and the weighted mixed surface area measure becomes the (usual) mixed surface area measure.
We emphasize that, in general, $S^\mu_{A;B}$ is only guaranteed to be a signed measure, and may not be a measure.

\begin{thm}
\label{t:double_deriv}
Let $\mu$ be a Borel measure on $\R^n,$ $n\geq 2,$ with $C^2$ density $\phi$. For a $C^2_+$ convex body $A$ and compact, convex sets $B$ and $C$, one has
\begin{equation}
\label{eq:deriv}
\begin{split}
\mu&(A;B,C)=(n-1)\int_{\s^{n-1}}h_C(u) dS^\mu_{A;B}(u).
\end{split}
\end{equation}
\end{thm}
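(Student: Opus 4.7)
The plan is to obtain \eqref{eq:deriv} by differentiating the single-mixed-measure representation \eqref{eq:arb_mixed} once more. By formula \eqref{eq:limit_def_alt}, $\mu(A;B,C) = \lim_{s\to 0}\frac{\mu(A+sB;C) - \mu(A;C)}{s}$. Since $A$ is $C^2_+$, the body $A+sB$ lies in $\conbod_0$ for all sufficiently small $s \geq 0$, so \eqref{eq:arb_mixed} gives $\mu(A+sB;C) = \int_{\s^{n-1}} h_C(u)\, dS_{\mu,A+sB}(u)$. Assuming that differentiation in $s$ may be passed inside the integral, it suffices to compute $\frac{d}{ds}\big|_{s=0} dS_{\mu,A+sB}$ on $\s^{n-1}$.

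Starting from the factorization $dS_{\mu,A+sB}(u) = \phi\bigl(n^{-1}_{A+sB}(u)\bigr)\, dS_{A+sB}(u)$, I would compute the $s$-derivative of each factor and combine them by the Leibniz rule. The polynomial expansion of surface area measures under Minkowski addition, $dS_{A+sB} = \sum_{j=0}^{n-1}\binom{n-1}{j} s^j\, dS_{A[n-1-j],B[j]}$ (see \cite[Section 5.1]{Sch14:book}), has linear term $(n-1)\,dS_{A[n-2],B[1]}$ at $s=0$. For the density factor, $n^{-1}_{A+sB}(u) = \nabla h_A(u) + s\,\nabla h_B(u)$ holds almost everywhere on $\s^{n-1}$ (as $h_A\in C^1$ and $h_B$ is differentiable a.e.), so the chain rule gives $\frac{d}{ds}\big|_{s=0}\phi\bigl(n^{-1}_{A+sB}(u)\bigr) = \bigl\langle \nabla\phi(n^{-1}_A(u)), \nabla h_B(u)\bigr\rangle$. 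The product rule then yields
$$\frac{d}{ds}\bigg|_{s=0} dS_{\mu,A+sB}(u) = (n-1)\phi\bigl(n^{-1}_A(u)\bigr)\,dS_{A[n-2],B[1]}(u) + \bigl\langle \nabla\phi(n^{-1}_A(u)), \nabla h_B(u)\bigr\rangle\, dS_A(u),$$
which is exactly $(n-1)\,dS^\mu_{A;B}(u)$ by Definition~\ref{defn:WMSAM}. Integrating against $h_C$ produces \eqref{eq:deriv}.

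The main obstacle is rigorously justifying the limit-integral exchange and the measure-valued Leibniz rule. I would handle this by first approximating $B$ in the Hausdorff metric by $C^2_+$ convex bodies $B_k$ via \cite[Theorem 2.7.1]{Sch14:book}. For such $B_k$, $A+sB_k$ is itself $C^2_+$ for small $s\geq 0$, and one has the classical formula $dS_{A+sB_k}(u) = \det\bigl((D^2 h_A + s D^2 h_{B_k})|_{u^\perp}\bigr)\,du$, which is polynomial in $s$. Combined with Taylor's theorem applied to $\phi \in C^2$ on a uniform neighborhood of $A$, this yields a remainder that is $o(s)$ uniformly in $u \in \s^{n-1}$, so dominated convergence legitimates the formal computation above for $B_k$. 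The result passes to general $B$ by the weak continuity of the mixed area measure $S_{A[n-2],B_k[1]}$ in $B_k$ and by pointwise a.e.\ convergence $\nabla h_{B_k}(u) \to \nabla h_B(u)$, together with a uniform $L^\infty$ bound on $|\nabla h_{B_k}|$ supplied by the Hausdorff approximation.
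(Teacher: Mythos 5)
Your computation for the smooth case is exactly the paper's: both start from \eqref{eq:limit_def_alt} and \eqref{eq:arb_mixed}, use the expansion $S_{A+sB}=S_A+s(n-1)S_{A[n-2],B[1]}+O(s^2)$ from \cite[Theorem 5.1.7]{Sch14:book}, differentiate $\phi(n^{-1}_{A+sB}(u))=\phi(\nabla h_A(u)+s\nabla h_B(u))$ by the chain rule, and combine by the Leibniz rule; your justification via the determinant formula and a uniform Taylor remainder for $C^2_+$ approximants $B_k$ is a legitimate substitute for the paper's appeal to Schneider.

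The gap is in your final step, the passage from $B_k$ to general $B$. What you establish there is only that the \emph{right-hand sides} converge: $(n-1)\int_{\s^{n-1}}h_C\,dS^\mu_{A;B_k}\to(n-1)\int_{\s^{n-1}}h_C\,dS^\mu_{A;B}$, using weak continuity of the mixed area measure and a.e.\ convergence of $\nabla h_{B_k}$. This does not show that the left-hand side $\mu(A;B,C)=\lim_{s\to0}\bigl(\mu(A+sB;C)-\mu(A;C)\bigr)/s$ exists for general $B$, nor that it equals $\lim_k\mu(A;B_k,C)$: you are silently interchanging the limit in $k$ with the limit in $s$, and convergence of the derivatives at the single point $s=0$ gives no control over the difference quotients of the limit function. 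The paper closes exactly this hole: setting $g_k(s)=\mu(A+sB_k;C)$, it proves that $g_k\to g$ pointwise (where $g(s)=\mu(A+sB;C)$) \emph{and} that $g_k'$ is uniformly Cauchy on $[0,1]$ (by bounding five integrals uniformly in $s$), so that the classical theorem on differentiating uniform limits yields that $g$ is differentiable with $g'(0)=\lim_k g_k'(0)$. To repair your argument you would need to upgrade your convergence statements so that they hold uniformly for $s$ in a neighborhood of $0$ (or otherwise control the difference quotients of $s\mapsto\mu(A+sB;C)$ uniformly in $k$), rather than only at $s=0$.
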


\begin{proof}
We first consider the case when $B$ is a $C^2_+$ convex body. Using \eqref{eq:limit_def_alt}, we compute $$\odv{\mu(A+sB;C)}{s}(0)=\odv{}{s}\left(\int_{\s^{n-1}}\phi(n^{-1}_{A+sB}(u))h_C(u)dS_{A+sB}(u)\right)\bigg|_{s=0},$$ where $n^{-1}_{A+sB}(u)$ is well defined as the Minkowski sum of $C^2_+$ bodies is also $C^2_+$. From \cite[Theorem 5.1.7]{Sch14:book}, we obtain that
\begin{equation}
    S_{A+sB}=S_{A}+s(n-1)S_{A[n-2],B[1]} + O(s^2).
    \label{eq:surface_poly}
\end{equation}
Therefore,  when taking the derivative in $s$ at $s=0$ of  $\phi(n^{-1}_{A+sB}(u))dS_{A+sB}(u),$ only the first two terms in \eqref{eq:surface_poly} contribute.

All that remains is to take derivative of $\phi(n^{-1}_{A+sB}(u)).$ We recall that if $K$ is $C_2^+$, then  $n^{-1}_K(u)=\nabla h_K(u)$ for all $u\in\s^{n-1}$, and thus  $n^{-1}_{A+sB}(u)=\nabla (h_{A+sB}(u))=\nabla h_A(u)+s\nabla h_B(u)$, for all $u\in \s^{n-1}$. Therefore, we obtain for all $u\in\s^{n-1}$ that
\begin{equation}\odv{\phi(n^{-1}_{A+sB}(u))}{s}\bigg|_{s=0}=\langle\nabla \phi(n^{-1}_A(u)),\nabla h_B(u)\rangle.
\label{eq:phi_mixed}
\end{equation}
Thus, we have shown \eqref{eq:deriv} in the case when $B$ is $C^2_+.$ We also notice that, for $s>0,$ by setting $K(s)=A+sB,$ we obtain
\begin{align*}\odv{\mu(A+sB;C)}{s}(s)&=\odv{\mu(K(s)+\tilde{s}B;C)}{\tilde{s}}(0)=(n-1)\!\!\int_{\s^{n-1}}\!\!\!\phi(n^{-1}_{K(s)}(u))h_C(u)dS_{{K(s)}[n-2],B[1]}(u)
\\
&+\int_{\s^{n-1}}\langle\nabla \phi(n^{-1}_{K(s)}(u)),\nabla h_B(u)\rangle h_C(u)dS_{{K(s)}}(u).
\end{align*}
We next consider the general case when $B$ is a compact, convex set. First, approximate $B$ by a sequence of $C^2_+$ convex bodies $\{B_i\},$ such that  $B_i\to B$ uniformly in the Hausdorff metric (see \cite[Theorem 2.7.1]{Sch14:book}; in particular note that this means $h_{B_i}$ converges to $h_B$ uniformly on $\s^{n-1}$).
For $s$ small (say $s \in [0,1]$), and $i\in \N$, let $K_i(s)=A+sB_i$ and consider the function
$$g_i(s)=\mu(K_i(s);C)=\int_{\s^{n-1}}h_C(u)dS^{\mu}_{{K_i}(s)}(u).$$
We show that, for fixed $s,$ $g_i(s)\to g(s)=\mu(K(s);C).$
First, observe that
$$|g_i(s)-g(s)|\leq \int_{\s^{n-1}}|h_C(u)||dS^{\mu}_{K_i(s)}(u)-dS^{\mu}_{K(s)}(u)|.$$
  Since $C$ is a compact, convex set, $|h_C(u)|$ is bounded and consequently it suffices to show that $S^{\mu}_{K_i(s)} \to S^{\mu}_{K(s)}$ weakly. Notice for every Borel $E\subset \s^{n-1},$ one has
  $$|S^{\mu}_{K_i(s)}(E) - S^{\mu}_{K(s)}(E)|=\bigg|\int_{n^{-1}_{K_i(s)}(E)}\phi(x)dx-\int_{n^{-1}_{K(s)}(E)}\phi(x)dx\bigg|.$$
  From \cite[Theorem 4.11]{Sch14:book}, one has $dS_{K_i(s)}\to dS_{K(s)}$ weakly, as $K_i(s)\to K(s)$ in the Hausdorff metric. Therefore, since $\phi$ is bounded on compact sets, the convergence of $S^{\mu}_{K_i(s)}(E)$ to $S^{\mu}_{K(s)}(E)$ follows. Since $E$ was an arbitrary Borel subset of $\s^{n-1},$ we have the weak convergence of $dS^{\mu}_{K_i(s)}(u)$ to $dS^{\mu}_{K(s)}(u),$ therefore the uniform convergence of $g_i(s)$ to $g(s)$.
  
  Next, our goal is to show the uniform convergence of $g'_i(s)$. Notice that
\begin{align*}g^\prime_i(s)&=\mu(K_i(s);B_i,C)=(n-1)\int_{\s^{n-1}}\phi(n^{-1}_{K_i(s)}(u))h_C(u)dS_{{K_i(s)}[n-2],B_i[1]}(u)
\\
&+\int_{\s^{n-1}}\langle\nabla \phi(n^{-1}_{K_i(s)}(u)),\nabla h_{B_i}(u)\rangle h_C(u)dS_{{K_i(s)}}(u).
\end{align*}
Note that $n^{-1}_{K(s)}$ may not be well-defined for $s\neq 0.$ Therefore, we shall show that $\{g_i^\prime(s)\}$ is a Cauchy sequence, to obtain that it has a limiting function $z(s)$. Then, using a standard theorem from classical analysis, we get that $g$ must be differentiable and $g'(s)=z(s)$, in particular, $\mu(A;B,C)=g'(0)=z(0).$ On the other-hand, by computing $z(0)$ via convergence of the integral formula for $g^\prime_i(0)$, we will finish the proof.

Consider $\epsilon >0,$ fix some $N=N(\epsilon)$ (to be determined later), and pick $i,j >N$. Then, it suffices to bound the following five integrals: 
\begin{enumerate}
    \item $\bigg|\int_{\s^{n-1}}\phi(n^{-1}_{K_i(s)}(u))h_C(u)\left(dS_{{K_i(s)}[n-2],B_i[1]}(u)-dS_{{K_j(s)}[n-2],B_j[1]}(u)\right)\bigg|,$
    \item $\int_{\s^{n-1}}|\phi(n^{-1}_{K_i(s)}(u))-\phi(n^{-1}_{K_j(s)}(u))| |h_C(u)|dS_{{K_j(s)}[n-2],B_j[1]}(u),$
    \item $\bigg|\int_{\s^{n-1}}\langle\nabla \phi(n^{-1}_{K_j(s)}(u)),\nabla h_{B_i}(u)\rangle h_C(u)\left(dS_{{K_i(s)}}(u) - dS_{{K_j(s)}}(u)\right)\bigg|,$
    \item $\int_{\s^{n-1}}|\langle\nabla \phi(n^{-1}_{K_j(s)}(u)),\nabla h_{B_i}(u)-\nabla h_{B_j}(u)\rangle| |h_C(u)| dS_{{K_j(s)}}(u),$
    \item $\int_{\s^{n-1}}|\langle\nabla \phi(n^{-1}_{K_i(s)}(u))-\nabla \phi (n^{-1}_{K_j(s)}(u)),\nabla h_{B_i}(u)\rangle| |h_C(u)|dS_{{K_i(s)}}(u).$
\end{enumerate}
Since $h_C$ is bounded on $\s^{n-1}$, we shall not discuss $h_C(u)$ for the rest of the argument.

The easiest integral is the third one. We recall that, since $h_{B_i}$ is convex and $C^2,$ it has bounded derivative on $\s^{n-1}.$  Also, since $\nabla \phi$ is bounded,  all $K_i(s)$ can be taken to belong to some large ball for all $i$ and $s\in [0,1]$, to obtain that $|\langle\nabla \phi(n^{-1}_{K_i(s)}(u)),\nabla h_{B_i}(u)\rangle|$ is bounded and the bound can be taken to be independent of $i$. Since $dS_{K_i(s)}\to dS_{K(s)}$ weakly, $\int_{\s^{n-1}}{dS_{K_i(s)}(u)}$ is a Cauchy sequence. Combining all of this, the third integral is bounded, i.e. we can pick $N$ large enough so the third integral is bounded by $\epsilon/5$.

We next bound the first integral. The argument is exactly the same as the previous one, except that we appeal to the proof of \cite[Theorem 5.1.7]{Sch14:book}, which shows that $dS_{{K_i(s)}[n-2],B_i[1]}$ converges weakly to $dS_{{K(s)}[n-2],B[1]}$  and so
$\int_{\s^{n-1}}dS_{{K_i(s)}[n-2],B_i[1]}(u)$ is also a Cauchy sequence, and, by making $N$ larger if need be, the first integral is also bounded by $\epsilon/5$.

For the fourth integral, we have that $|\nabla \phi(n^{-1}_{K_i(s)}(u))| \leq L_\phi$ for some positive constant $L_\phi.$ Thus, it suffices to show that $\|\nabla h_{B_i}(u)-\nabla h_{B_j}(u)\|_{L_2(\s^{n-1})}$ goes to zero. This follows from the classical analysis fact that if a sequence of convex, differentiable functions converges to some convex function, then their derivatives also converge almost everywhere. Consequently, dominated convergence theorem yields the result, and, thus for $N$ large enough, the fourth integral is bounded by $\epsilon/5.$

For the second integral, we  use  that $\phi$, being $C^1$, is Lipschitz on compact sets, thus, for $s\in [0,1],$ 
\begin{equation}|\phi(n^{-1}_{K_i(s)}(u))-\phi(n^{-1}_{K_j(s)}(u))| \leq L_\phi |n^{-1}_{K_i(s)}(u)-n^{-1}_{K_j(s)}(u)|=L_\phi s|\nabla h_{B_i}(u)-\nabla h_{B_j}(u)|.
\label{eq:lipscthiz_fact}
\end{equation}
Thus, like in the previous argument, the second integral is bounded by $\epsilon/5.$ Finally, for the fifth integral, we use that $\phi$ being $C^2$ yields $\nabla \phi$ is a Lipschitz map on compact sets, and we argue similarly to \eqref{eq:lipscthiz_fact}. Consequently, the fifth integral can also be bounded by $\epsilon/5.$

\noindent Thus, we have shown that for all $s\in [0,1],$ $u\in\s^{n-1},$ and $i,j>N,$ that $|g^\prime_i(s)-g^\prime_j(s)| \leq \epsilon$.
\end{proof}

\begin{rem}
    In the above, all we actually require is that the density of $\mu$ is $C^2$ in a neighborhood of $A.$ Furthermore, in the case when $B$ is of the class $C^2_+,$ we can weaken the condition on the density of $\mu$ to only being Lipschitz in a neighborhood of $A.$
\end{rem}

\begin{rem}
\label{r:mixed_exist}
In fact, we can define the weighted mixed surface area measure even when $A$ is not $C^2_+$. Inspired by Lemma~\ref{l:second}, we first define
$\mu(K;f):=\int_{\s^{n-1}}f(u)dS^{\mu}_{K}(u)$ for any $f\in C(\s^{n-1})$, so that the representation formula for the mixed measure $\mu(K;L)$ may be written concisely as $\mu(K;L)=\mu(K;h_L)$. Now define, for any $f\in C(\s^{n-1})$, the quantity
\begin{equation}
    \mu(A;B,f)=\lim_{s\to 0}\frac{\mu(A+sB;f)-\mu(A;f)}{s} ;
\end{equation}
this definition is, of course, inspired by \eqref{eq:limit_def_alt}. We observe that for any fixed convex body $A$ containing the origin in its interior and compact, convex set $B$, $\mu(A;B,\cdot)$ is a continuous linear functional on $C(\s^{n-1})$, and the Riesz-Markov-Kakutani representation theorem therefore guarantees that it can be written as an integral with respect to a signed measure on the sphere. This signed measure is what we define as $(n-1)dS^\mu_{A;B}$, to be consistent with Theorem~\ref{t:double_deriv}. Note that unlike the weighted surface area, which was defined as the pushforward of a measure on $\partial K$ by the Gauss map in Definition~\ref{def:surface_mu}, we have not given an explicit description for the weighted mixed surface area measure $S^\mu_{A;B}$; this is, however, not surprising since such an explicit description is not available even in the case of Lebesgue measure.
\end{rem}

\begin{rem}
We note that the expression \eqref{eq:WMSAM} for $S^\mu_{A;B}$ is particularly nice in the case of a probability measure $\mu$ on $\R^n$, because in this case, the function 
$\nabla \phi/\phi$ that appears in the second term is the so-called score function that arises naturally in the definition of Fisher information. The score function is central in the study of entropy power inequalities (see, e.g., \cite{Sta59, MB07, JMMR23}).
For example, when $\mu=\gamma_n$ is the Gaussian measure, the score function  
$\nabla \phi(x)/\phi(x)=-x$, so that we have the pleasant form
\begin{equation}\label{eq:gaus-WMSAM}
dS^{\gamma_n}_{A;B}(u)=d\tilde{S}^{\gamma_n}_{A;B}(u)  -
\frac{1}{n-1}\langle n^{-1}_A(u)),n^{-1}_B(u))\rangle dS^{\gamma_n}_{A}(u) ,
\end{equation}
with 
$$
d\tilde{S}^{\gamma_n}_{A;B(u)}=
\frac{1}{(2\pi)^{n/2}} e^{-|n^{-1}_A(u))|^2 /2} dS_{A[n-2],B[1]}(u) ,
$$
when $A$ and $B$ are both $C^2_+$ convex bodies.
\end{rem}

\noindent We note that combining \eqref{eq:2BorNot} with Theorem~\ref{t:double_deriv} yields
$$\odv[2]{}{s}\mu(A+sB)\big|_{s=0}=(n-1)\int_{\s^{n-1}}h_B(u) dS^\mu_{A;B}(u),$$
which is reminiscent of \cite[Proposition 3.2]{KL21:1}.

\subsection{Weighted versions of Minkowski's first and second inequalities}

We conclude this section by establishing Minkowski's first and second inequalities. In the introduction, we discussed the many different types of concavity satisfied by the Gaussian measure. These different types of concavities can be expressed in one definition. A Borel measure $\mu$ on $\R^n$ is said to be $F$-concave on a class $\mathcal{C}$ of Borel subsets of $\R^n$ if there exists a continuous, invertible, monotonic function $F:(0,\mu(\R^n))\to (-\infty,\infty)$ such that, for every pair $K,L \in \mathcal{C}$  and every $\lambda \in [0,1]$, one has $\mu(K),\mu(L) < \infty$ and
  \begin{equation} \mu((1-\lambda)K +\lambda L)\geq F^{-1}\left((1-\lambda)F(\mu(K)) +\lambda F(\mu(L))\right). \label{eq:fcon} \end{equation}
  The case when $F(x)=x^s,$ $s\in \R\setminus\{0\},$ is known as $s$-concavity. Borell's hierarchy completely characterizes $s$-concave Borel measures $\mu$ when the class $\mathcal{C}$ is all compact subsets of $\R^n$ \cite[Theorem 3.2]{Bor75a}. Notice that the $1/n$-concavity of the Guassian measure over symmetric convex bodies is strictly outside of Borell's classification. We emphasis that, if $F$ is increasing, like $x^s,$ $s>0,$ then $F\circ \mu$ is a concave function over $\mathcal{C}.$ Likewise, if $F$ is decreasing, like $x^s, s<0,$ then $F\circ \mu$ is a convex function over $\mathcal{C}.$ Additionally, it is not hard to show that, if there is equality in \eqref{eq:fcon} for a single $\lambda\in (0,1),$ then there is equality for every $\lambda \in (0,1).$ 
  \begin{prop}
	\label{p:equal}
	Let $\mu$ be $F$-concave on a class of Borel sets $\mathcal{C}$. Suppose we have $A,B\in\mathcal{C}$ and some $\lambda\in(0,1)$ such that
	\begin{equation*}
	\mu\left((1-\lambda)K+\lambda L\right) = F^{-1}\left((1-\lambda) F(\mu(K)) +\lambda F(\mu(L))\right)
	\end{equation*}
	Then, equality holds for all $\lambda \in [0,1]$.
	\end{prop}
	\begin{proof}
	Since $F$ is monotone and invertible, it is either increasing or decreasing, and so the function given by $f(t)=F\left(\mu\left((1-t)K+t L\right)\right)$
	is either concave or convex. Denote the linear function $g(t)=(1-t) F(\mu(K)) +t F(\mu(L))$. We have that $g(0)=f(0)=\mu(K)$ and $g(1)=f(1)=\mu(L)$. By hypothesis however, we also have $f(\lambda)=g(\lambda)$. But, since either $f$ or $-f$ is concave, this implies $f=g$ on all of $[0,1].$
	\end{proof}

We now establish Minkowski's first and second inequality for $F$-concave measure of a class of compact, convex sets, using the formulas established. The case of Minkowski's first inequality had been previously established without equality conditions by Livshyts \cite{Liv19}.
\begin{thm}[Minkowski's first and second inequalities for $F$-concave measures and convex bodies]
\label{t:first_second}
Let $\mu$ be a Borel measure on $\R^n$ that is $F$-concave on a class of compact, convex sets $\mathcal{C}.$ Then, for a convex body $K$ containing the origin in its interior, such that $F(x)$ is differentiable at $x=\mu(K)$, and a compact, convex set $L$, both in $\mathcal{C}$, one has Minkowski's first inequality:
$$
\mu(K; L) \geq \mu(K; K)+\frac{F(\mu(L))-F(\mu(K))}{F^{\prime}(\mu(K))},$$
with equality if, and only if, there is equality in \eqref{eq:fcon} for some $\lambda \in (0,1)$. 

Furthermore, if $\mu$ has $C^2$ density and $F(x)$ is twice differentiable at $x=\mu(K),$ then:
\begin{align*}-\frac{F^{\prime\prime}(\mu(K))}{F^{\prime}(\mu(K))}\left(\mu(K;L)-\mu(K;K)\right)^2\geq \left(\mu(K;L,L)
-2\mu(K;K,L)+\mu(K;K,K)\right).
\end{align*}
\end{thm}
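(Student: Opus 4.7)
The plan is to reduce both inequalities to first- and second-order conditions for concavity (or convexity) of a single real-valued function of one variable. Define $g\colon[0,1]\to\R$ by $g(\lambda)=F(\mu(K_\lambda))$ where $K_\lambda=(1-\lambda)K+\lambda L$. Since $(1-t)K_{\lambda_1}+tK_{\lambda_2}=K_{(1-t)\lambda_1+t\lambda_2}$, the $F$-concavity of $\mu$ applied to the pair $(K_{\lambda_1},K_{\lambda_2})$ and then $F$ applied to both sides yields $g((1-t)\lambda_1+t\lambda_2)\geq(1-t)g(\lambda_1)+tg(\lambda_2)$ when $F$ is increasing, and the reversed inequality when $F$ is decreasing. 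Hence $g$ is concave on $[0,1]$ in the former case and convex in the latter. Equality in \eqref{eq:fcon} at a single interior $\lambda$ forces (by the propagation noted in the text) $g$ to be affine on $[0,1]$.

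For the first inequality, \eqref{eq:convex_combo} gives $\frac{d}{d\lambda}\mu(K_\lambda)|_{\lambda=0}=\mu(K;L)-\mu(K;K)$, so the chain rule yields $g'(0)=F'(\mu(K))(\mu(K;L)-\mu(K;K))$. The tangent-line inequality reads $g(1)\leq g(0)+g'(0)$ in the concave case and $g(1)\geq g(0)+g'(0)$ in the convex case. Substituting and dividing by $F'(\mu(K))$, noting that the sign of $F'$ and the direction of concavity switch together, gives in both cases
\begin{equation*}
\mu(K;L)-\mu(K;K)\;\geq\;\frac{F(\mu(L))-F(\mu(K))}{F'(\mu(K))},
\end{equation*}
with equality iff $g$ is affine, equivalently iff equality in \eqref{eq:fcon} holds for some (hence every) $\lambda\in(0,1)$.

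For the second inequality, we compute $g''(0)$. To avoid the formal Minkowski ``subtraction'' $L-K$, introduce the two-variable function $G(u,v)=\mu(uK+vL)$ for $u,v\geq 0$, so that $\mu(K_\lambda)=G(1-\lambda,\lambda)$ and, by the ordinary chain rule,
\begin{equation*}
\tfrac{d^{2}}{d\lambda^{2}}\mu(K_\lambda)\big|_{\lambda=0}=G_{uu}(1,0)-2G_{uv}(1,0)+G_{vv}(1,0).
\end{equation*}
Using Definition~\ref{def:mixed} together with \eqref{eq:limit_def_alt}, one identifies $G_{uu}(1,0)=\mu(K;K,K)$, $G_{uv}(1,0)=\mu(K;K,L)$, and $G_{vv}(1,0)=\mu(K;L,L)$; Theorem~\ref{t:double_deriv}, under the assumed smoothness hypotheses on $K$ and on the density of $\mu$, guarantees existence of these second-order quantities. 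Applying the chain rule to $g=F\circ\mu(K_\cdot)$ then yields
\begin{equation*}
g''(0)=F''(\mu(K))\bigl(\mu(K;L)-\mu(K;K)\bigr)^{2}+F'(\mu(K))\bigl(\mu(K;L,L)-2\mu(K;K,L)+\mu(K;K,K)\bigr).
\end{equation*}
The second-order condition gives $g''(0)\leq 0$ (concave case, $F'>0$) or $g''(0)\geq 0$ (convex case, $F'<0$); isolating the mixed-measure term and dividing by $F'(\mu(K))$, with a sign flip in the decreasing-$F$ case that exactly compensates the flipped inequality, produces the stated second inequality in both cases.

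The main obstacle is the identification of $\frac{d^{2}}{d\lambda^{2}}\mu(K_\lambda)|_{\lambda=0}$ with the symmetric bilinear expression $\mu(K;K,K)-2\mu(K;K,L)+\mu(K;L,L)$; the $G(u,v)$-parametrization bypasses the need to give meaning to $L-K$, while everything else is bookkeeping with the signs of $F'$ and $F''$.
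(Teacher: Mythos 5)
Your proposal is correct and follows the paper's argument in all essentials: both proofs reduce the two inequalities to the first- and second-order conditions for concavity (or convexity, when $F$ is decreasing) of $\lambda\mapsto F(\mu((1-\lambda)K+\lambda L))$, with the first derivative supplied by \eqref{eq:convex_combo} and the equality case handled by the tangent-line/affine argument. The one place you genuinely diverge is the identification of $\tfrac{d^2}{d\lambda^2}\mu(K_\lambda)\big|_{\lambda=0}$ with $\mu(K;L,L)-2\mu(K;K,L)+\mu(K;K,K)$: you route this through the two-variable chain rule for $G(u,v)=\mu(uK+vL)$, whereas the paper differentiates the integral representation $\int_{\s^{n-1}}(h_L-h_K)\,dS_{\mu,K_\lambda}(u)$ directly, using the expansion \eqref{eq:week_deriv_sur} of $S_{K_\lambda}$ together with \eqref{eq:phi_mixed2}, and then collects terms via linearity and the symmetry \eqref{eq:mixed_equal}. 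Your route is cleaner to state but tacitly assumes that $G$ has a genuine second-order expansion at $(1,0)$ so that the directional second derivative equals $G_{uu}-2G_{uv}+G_{vv}$; this is precisely what the paper's hands-on computation (and the approximation of a general $L$ by $C^2_+$ bodies in the proof of Theorem~\ref{t:double_deriv}) establishes under the stated hypotheses, so the difference is one of explicitness rather than substance. One small omission: you divide by $F'(\mu(K))$ without comment, while the paper separately disposes of the degenerate case $F'(\mu(K))=0$ by showing that then $\mu(K;L)=\mu(K;K)=0$ and the inequality is vacuous.
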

\begin{proof}
Consider the function given by $$H(\lambda)=F\left(\mu((1-\lambda) K +\lambda L)\right)- (1-\lambda)F(\mu(K)) -\lambda F(\mu(L)).$$ Thus  $H(0)=H(1)=0.$ Furthermore, if $F$ is an increasing function, then $H$ is concave and $H^\prime(0)\geq 0$. Similarly, if $F$ is a decreasing function, then $H$ is convex and $H^\prime(0)\leq 0$.

Computing the derivative from \eqref{eq:convex_combo} and the chain rule, we obtain directly that
\begin{align}
    H^\prime(0)=F^\prime(\mu(K))(\mu(K;L)-\mu(K;K)) -(F(\mu(L))-F(\mu(K)).
    \label{eq:H_deriv}
\end{align}
Assume $F^\prime(\mu(K))\neq 0.$ If $F$ is increasing, setting this greater than zero and dividing through by $F^\prime(\mu(K))$ yields the first claim. Similarly, if $F$ is decreasing, setting this less than zero and dividing through by the (negative) term $F^\prime(\mu(K))$ yields the same result. Now, suppose $F^\prime(\mu(K))= 0.$ Then, from \eqref{eq:H_deriv}, one obtains in both cases of the monotonicity of $F$ that $\mu(K) \geq \mu(L)$ for all $L\in\mathcal{C}.$ From \eqref{eq:arb_mixed_0}, this yields $\mu(K;L)=\mu(K;K)=0.$ Thus, Minkowski's first inequality is trivial in this case.

Suppose that there is equality in \eqref{eq:fcon}. Then,
$$F\left(\mu((1-\lambda) K+\lambda L)\right)-(1-\lambda) F(\mu(K))-\lambda F(\mu(L))=0, \; \text{for all }\lambda\in[0,1].$$
Differentiating in $\lambda$ yields $$\odv{}{\lambda} F\left(\mu((1-\lambda) K+\lambda L)\right)=F(\mu(L))-F(\mu(K)).$$
Using the chain rule and evaluating at $\lambda=0$ yields
$$
F^{\prime}(\mu(K))\odv{}{\lambda}\mu\left((1-\lambda)K +\lambda L\right)\bigg|_{\lambda=0}=F(\mu(L))-F(\mu(K)).
$$
Inserting \eqref{eq:convex_combo}, we have equality in Minkowski's first inequality. Conversely, suppose that we have equality in Minkowski's first inequality. Then,  running the argument backwards, we get
$$
\odv{}{\lambda} F\left(\mu((1-\lambda) K+\lambda L\right)\bigg|_{\lambda=0}=F(\mu(L))-F(\mu(K)).
$$
This implies equality in \eqref{eq:fcon}. Indeed, let $f(t)=F\left(\mu\left((1-t)K+t L\right)\right)$, which is either concave or convex on $[0,1]$ by hypothesis, and denote the linear function $g(t)=(1-t) F(\mu(K)) +t F(\mu(L))$. We have $f(0)=g(0)$ and $f(1)=g(1)$. However, we have also shown that $f^\prime(0)=g^\prime(0)$. From the concavity of $f$ or $-f$, it follows that $f=g$ on $[0,1]$ via Proposition~\ref{p:equal}.

As for the second inequality, we suppose that $F$ is increasing; the case when $F$ is decreasing is similar. By way of approximation, we first suppose that $K$ is $C^2_+$. We take yet another derivative of $H$ and use that $H^{\prime\prime}(0)\leq 0$ in this instance. One obtains, using \eqref{eq:convex_combo}, that
\begin{equation}F^{\prime\prime}(\mu(K))\left(\mu(K;L)-\mu(K;K)\right)^2+F^\prime(\mu(K))\left(\odv{}{\lambda}\int_{\s^{n-1}}(h_L-h_K)dS^{\mu}_{K_{\lambda}}(u)\right)\bigg|_{\lambda=0}\leq 0,
\label{eq:min_sec_con}
\end{equation}
where $K_\lambda=(1-\lambda)K+\lambda L.$ Next, suppose $L$ is $C_+^2$ and so $K_\lambda$ is also $C^2_+.$ Hence, one has $dS^{\mu}_{K_{\lambda}}(u)=\phi(n^{-1}_{(1-\lambda)K+\lambda L}(u))dS_{K_{\lambda}}(u).$ Now, observe that for almost all $u\in\s^{n-1},$
\begin{equation}\odv{\phi(n^{-1}_{(1-\lambda)K+\lambda L}(u))}{\lambda}\bigg|_{\lambda=0}=\langle\nabla \phi(n^{-1}_{K}(u)),\nabla h_{L}(u)\rangle-\langle\nabla \phi(n^{-1}_{K}(u)),\nabla h_{K}(u)\rangle.
\label{eq:phi_mixed2}
\end{equation}
In order to take the weak derivative of $dS_{K_{\lambda}}(u)$, we expand $S_{K_\lambda}$ as a polynomial in the variable $\lambda$, and obtain from \cite[Theorem 5.1.7]{Sch14:book} that
\begin{equation}
    S_{K_\lambda}=S_{K}+\lambda (n-1)\left(S_{K[n-2],L[1]} - S_{K}\right) + O(\lambda^2).
    \label{eq:week_deriv_sur}
\end{equation}
We deduce that the weak derivative of  $dS_{K_\lambda}(u)$ at $\lambda =0$ is 
$(n-1)\left(dS_{K[n-2],L[1]}(u) - dS_{K}(u)\right).$ Using the product rule on $\phi(n^{-1}_{K_\lambda}(u))dS_{K_\lambda}(u)$ yields the result.
For general compact, convex $L,$ we conclude by the linearity of \eqref{eq:deriv} in the third variable and the approximation argument in the proof of Theorem~\ref{t:double_deriv}. Finally, to remove the assumption that $K$ is $C^2_+$, we appeal to Remark~\ref{r:mixed_exist}, which explains that $\mu(K;K,K),\mu(K;K,L)$ and $\mu(K;L,L)$ still exist in this instance.
\end{proof}

\noindent We list as a special case of Theorem~\ref{t:first_second} the case for measures in $\mathcal{M}$ given by \eqref{eq:best_meas}, where we recall that Cordero-Erasquin and Rotem \cite{CR22} showed every Borel measure $\mu\in\mathcal{M}$ is $1/n$-concave over the class of symmetric convex bodies.
\begin{thm}\label{t:GZCER}
For symmetric convex bodies $K$ and $L$ in $\R^n$ and $\mu\in\mathcal{M}$, one has
\begin{equation*}
    \mu(K;L) - \mu(K;K) \geq n\mu(L)^{1/n}\mu(K)^{\frac{n-1}{n}} - n\mu(K),
\end{equation*}
with equality if, and only if, $K=L$. Additionally, if $\mu$ has $C^2$ density, then
$$\left(\mu(K;L)-\mu(K;K)\right)^2\geq\frac{n}{n-1} \mu(K)\bigg(\mu(K;L,L)-2\mu(K;K,L)+\mu(K;K,K)\bigg).$$
\end{thm}

\subsection{The Gaussian Measure in the Plane}
\label{sec:Gauss_plane}
We conclude this section by exerting some effort to study $\gamma_2.$ Upon observation of \eqref{eq:mixed_gaussian}, it is not clear that $\gamma_n(A;B,C)=\gamma_n(A;C,B)$ without appealing to the Schwarz theorem. We will show explicitly that this is true, and furthermore, use this opportunity to further develop the theory of mixed measures in the plane. We recall that in the plane one has $V_2(A[0],B[1],C[1])=V_2(B,C)$; this does not happen with mixed measures in general, since the inverse Gauss map of $A$ appears in the density of the measure. To emphasis the difference between mixed volume and mixed measures, we will show that mixed measures for $\gamma_2$ can be negative.

Henceforth, we write $u\in\s^{1}$ as $u=u(\theta)=(\cos(\theta),\sin(\theta))$ for $\theta\in [0,2\pi].$ Additionally, we  view the support function of a compact, convex set as a function in $\theta.$ Formally, let $A$ be a convex body and set $h_A(\theta):=h_A(u(\theta)).$ It is beneficial to relate $\nabla h_A (u)$ with $h^\prime_A (\theta).$ Notice that $u(\theta)$ is perpendicular to $u^\prime(\theta)=(-\sin(\theta),\cos(\theta)).$ Then, one has
$$h_A^\prime(\theta)=\langle \nabla h_A(u), u^\prime(\theta) \rangle=-\pdv{h_A(u(x,y))}{x}\sin(\theta)+\pdv{h_A(u(x,y))}{y}\cos(\theta).$$
On the other hand, $h_A(u)$ is $1$-homogeneous. Thus,
$$h_A(u)=\langle \nabla h_A(u),u \rangle=\pdv{h_A(u(x,y))}{x}\cos(\theta)+\pdv{h_A(u(x,y))}{y}\sin(\theta).$$
Writing the vector $\nabla h_A(u)$ in the basis spanned by $u$ and $u^\prime,$ we obtain
\begin{equation}
    \label{eq:form_nabla_h}
    \nabla h_A(u)=\langle \nabla h_A(u),u \rangle u \!+ \langle \nabla h_A(u), u^\prime \rangle u^\prime=h_A(\theta)u(\theta) \!+ h^\prime_A(\theta)u^\prime(\theta); \; u=(\cos(\theta),\sin(\theta)).
\end{equation}
Note, additionally, that, for compact, convex sets $A$ and $B$, one has for almost all $u\in\s^1$
\begin{equation}
    \label{A_B_inner}
    \langle \nabla h_A(u),\nabla h_B(u)\rangle=h_A(\theta)h_B(\theta)+h^\prime_A(\theta)h^\prime_B(\theta).
\end{equation}
We now use the fact that $A$ is $C^2_+$; we observe that our situation simplifies quite a bit in this instance. Firstly, the Monge-Amp{\`e}re equation, which relates the support and curvature functions of a convex body, simplifies to a simple second order, linear, ordinary differential equation in the variable $\theta$ \cite{Gar06:book}:
$$h^{\prime \prime}_A(\theta)+h_A(\theta) = f_A(\theta).$$
Since $A$ is $C^2_+$, we also obtain, for $u=(\cos(\theta),\sin(\theta)),$ $e^{-|n^{-1}_A(u)|^2/2}=e^{-\frac{\left((h_A^\prime(\theta))^2+(h_A(\theta))^2\right)}{2}}.$
Inserting the above two formulae and \eqref{eq:form_nabla_h} into \eqref{eq:mixed_gaussian}, one obtains, in the case where $B$ and $C$ are compact, convex sets and $A$ is a convex body with $C^2_+$ boundary,
\begin{equation}\label{eq:Gaussian_mixed_almost_sym}
    \begin{split}2\pi\gamma_2(A;B,C)&=\int_{0}^{2\pi}e^{-\frac{\left((h_A^\prime(\theta))^2+(h_A(\theta))^2\right)}{2}}h_C(\theta)dS_{B}(\theta)
\\
&-\int_{0}^{2\pi}\left[h_A(\theta)h_B(\theta)+h^\prime_A(\theta)h^\prime_B(\theta)\right] h_C(\theta)e^{-\frac{\left((h_A^\prime(\theta))^2+(h_A(\theta))^2\right)}{2}}dS_{A}(\theta).
\end{split}
\end{equation}
Arguing by approximation, suppose that $B$ is also $C^2_+.$ Then,
$$(h^{\prime \prime}_B(\theta)+h_B(\theta))d\theta = dS_B(\theta).$$
Thus \eqref{eq:Gaussian_mixed_almost_sym} becomes
\begin{align*}2\pi\gamma_2(A;B,C)&=\int_{0}^{2\pi}e^{-\frac{\left((h_A^\prime(\theta))^2+(h_A(\theta))^2\right)}{2}}h_C(\theta)[h^{\prime\prime}_B(\theta)+h_B(\theta)(1-h^2_A(\theta)-h^{\prime\prime}_A(\theta)h_A(\theta))]d\theta
\\
&-\int_{0}^{2\pi}h^\prime_B(\theta)h_C(\theta)h^\prime_A(\theta)e^{-\frac{\left((h_A^\prime(\theta))^2+(h_A(\theta))^2\right)}{2}}[h^{\prime \prime}_A(\theta)+h_A(\theta)]d\theta.
\end{align*}
Next, observe that
$$d\left(e^{-\frac{\left((h_A^\prime(\theta))^2+(h_A(\theta))^2\right)}{2}}\right)=-e^{-\frac{\left((h_A^\prime(\theta))^2+(h_A(\theta))^2\right)}{2}}h^\prime_A(\theta)[h^{\prime \prime}_A(\theta)+h_A(\theta)]d\theta.$$
Consequently, integration by parts yields, with $f_A(\theta)=h^{\prime\prime}_{A}(\theta)+h_A(\theta),$
\begin{equation}\gamma_2(A;B,C)=
\int_{0}^{2\pi}e^{-\frac{\left((h_A^\prime(\theta))^2+(h_A(\theta))^2\right)}{2}}[h_B(\theta)h_C(\theta)(1-h_A(\theta)f_A(\theta))-h^{\prime}_B(\theta)h_C^\prime(\theta)]\frac{d\theta}{2\pi}.
\label{eq:Gaussian_mixed_sym}
\end{equation}
Since \eqref{eq:mixed_gaussian} is independent of the second derivative of the support function of $B,$ the assumption that $B$ is $C^2_+$ in \eqref{eq:Gaussian_mixed_sym} can be dropped via an approximation argument. If $A$ is a ball of radius $R,$ then $h_A=f_A=R.$ Therefore, a remarkable consequence of \eqref{eq:Gaussian_mixed_sym} is the following: given compact, convex sets $B,C\subset \R^2,$ there exists $R$ such that $$\gamma_2(RB_2^n;B,C) <0.$$

\section{Local log-submodularity}
\label{llog_sub}

\subsection{Inequalities for rotationally invariant log-concave measures}

We next generalize inequality \eqref{eq:sharp_mixed} to the case of rotational invariant log-concave measures. We remind the reader that $\mu$ is a rotational invariant log-concave measure if there exists a non-decreasing, convex function $W:\R^+\to \R\cup \{+\infty\}$ such that $d\mu=e^{-W(|x|)}dx.$ The goal of this section is to find a quantity $\mathcal{A}$ such that, for every rotational invariant $\mu$, one has 

\begin{equation}
\mu(A;B)\mu(A;C)\geq \mathcal{A} \frac{\kappa^2_{n-1}}{\kappa_{n-2}\kappa_n}\mu(A)\mu(A;B,C),
\label{eq:objective}
\end{equation}
where we consider the case when $B$ is replaced in the above by some centered zonoid $Z,$ $C$ is an arbitrary compact, convex set, and $A=RB_2^n,$ $R>0$. In the case of volume (that is, \eqref{eq:sharp_mixed}, where $\mathcal{A}=1$), this is the same as considering $A=B_2^n$; both sides of the inequality in \eqref{eq:sharp_mixed} being homogeneous of degree $2n-2$ in the variable $A$.  As we will see, this is not true for \eqref{eq:objective}. 
\begin{prop}
\label{p:rotation_log_ineq}
Let $\mu$ be a Borel measure on $\R^n$ such that $d\mu(x)=e^{-W(|x|)}dx$ for some convex function $W:[0,\infty)\to\R$ that is differentiable at $R>0$. Then, for $A=RB_2^n,$ and compact, convex sets $B,C\subset\R^n,$ \eqref{eq:objective} is equivalent to
\begin{equation}
\label{eq:rotation_log_ineq}
    \begin{split}
        V(B_2^n[n-1],B)&V(B_2^n[n-1],C)\geq \mathcal{A}\frac{n-1}{n}\frac{\kappa^2_{n-1}}{\kappa_{n-2}\kappa_n}I_\mu(R)
        \\
        &\left(V(B_2^n[n-2],B,C)-\frac{RW^\prime(R)}{n(n-1)}\int_{\s^{n-1}}\langle u,\nabla h_B(u)\rangle h_C(u)du\right)
    \end{split}
\end{equation}
where
$$I_\mu(R)=\int_{B_2^n}e^{W(R)-W(R|x|)}dx.$$
\end{prop}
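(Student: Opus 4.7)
The strategy is direct computation: I would evaluate each of the four quantities $\mu(A)$, $\mu(A;B)$, $\mu(A;C)$, and $\mu(A;B,C)$ explicitly when $A = RB_2^n$, and then substitute into \eqref{eq:objective} and simplify until \eqref{eq:rotation_log_ineq} emerges. Since the Euclidean ball is of class $C^2_+$ and the density $\phi(x)=e^{-W(|x|)}$ is smooth away from the origin, every formula from Section~\ref{sec:mixed} applies directly without approximation.

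For $A = RB_2^n$, the Gauss map satisfies $n_A^{-1}(u) = Ru$, so the density restricted to $\partial A$ is the constant $e^{-W(R)}$, and the curvature function is $f_A \equiv R^{n-1}$, giving $dS_A(u) = R^{n-1}\,du$. By homogeneity of mixed surface area measures, $dS_{A[n-2],B[1]}(u) = R^{n-2}\,dS_{B_2^n[n-2],B[1]}(u)$. Finally, by the chain rule, $\nabla \phi(Ru) = -W'(R) e^{-W(R)} u$. A change of variables gives $\mu(A) = R^n e^{-W(R)} I_\mu(R)$, and \eqref{eq:arb_mixed} together with $\int_{\s^{n-1}} h_B(u)\,du = n\,V(B_2^n[n-1],B)$ yields
\[
\mu(A;B) = n R^{n-1} e^{-W(R)} V(B_2^n[n-1], B),
\]
with the analogous identity for $\mu(A;C)$. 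Applying Theorem~\ref{t:double_deriv} together with Definition~\ref{defn:WMSAM}, the two terms of $dS^\mu_{A;B}$ contribute respectively
\begin{align*}
\mu(A;B,C) &= n(n-1) R^{n-2} e^{-W(R)} V(B_2^n[n-2], B, C) \\
&\quad - R^{n-1} W'(R) e^{-W(R)} \int_{\s^{n-1}} \langle u, \nabla h_B(u) \rangle h_C(u)\,du,
\end{align*}
where the negative sign comes from $\nabla \phi(Ru) = -W'(R)e^{-W(R)} u$ in the score-function term.

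Substituting these four expressions into \eqref{eq:objective} produces a common factor of $R^{2n-2} e^{-2W(R)}$ on both sides, which cancels and leaves behind a single $I_\mu(R)$ factor on the right. Dividing by $n^2$ and pulling a factor of $\tfrac{n-1}{n}$ out of the bracket on the right-hand side gives exactly \eqref{eq:rotation_log_ineq}, and every step is reversible, establishing equivalence. The only real obstacle is careful bookkeeping of the homogeneity in $R$ and of the combinatorial constants $n$ and $n-1$; there is no analytic subtlety.
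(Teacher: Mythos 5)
Your proposal is correct and follows essentially the same route as the paper: both compute $\mu(RB_2^n)$, $\mu(RB_2^n;B)$, and $\mu(RB_2^n;B,C)$ explicitly via the representation formulas (using $n_A^{-1}(u)=Ru$, $dS_{RB_2^n}=R^{n-1}du$, and $\nabla\phi(Ru)=-W'(R)e^{-W(R)}u$) and then substitute into \eqref{eq:objective} and cancel the common factor $n^2R^{2n-2}e^{-2W(R)}$. All the homogeneity bookkeeping in your sketch checks out against the paper's computation.
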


\begin{proof}
Observe that, for a compact, convex set $B\subset \R^n,$ one has
$$\mu(RB_2^n;B)= e^{-W(R)}R^{n-1}\int_{\s^{n-1}}h_B(u)du= e^{-W(R)}R^{n-1}nV(B_2^n [n-1],B).$$
We next compute:
\begin{align*}
&\mu(RB_2^n;B,C)
\\
&= e^{-W(R)} \left(R^{n-2}(n-1)\!\int_{\s^{n-1}}\!\!h_C(u)dS_{B_2^n[n-2],B}(u)- R^{n-1}W^\prime(R)\!\int_{\s^{n-1}}\!\!\langle u,\nabla h_B(u)\rangle h_C(u)du\right)\\
&=R^{n-2}e^{-W(R)}n(n-1)\!\left(\!V(B_2^n[n-2],B,C)-\frac{RW^\prime(R)}{n(n-1)}\int_{\s^{n-1}}\!\!\langle u,\nabla h_B(u)\rangle h_C(u)du\right).
\end{align*}
where, in the last line, we used that $\int_{\s^{n-1}}h_C(u)dS_{B_2^n[n-2],B}(u)=nV(B_2^n[n-2],B,C).$ Next, we write $\mu(RB_2^n)$ in the following way:
\begin{align*}
    \mu(RB_2^n)= \int_{RB_2^n}e^{-W(|x|)}dx=R^n e^{-W(R)}\int_{B_2^n}e^{W(R)-W(R|x|)}dx.
\end{align*}
Inserting each term into \eqref{eq:objective} yields the result.
\end{proof}

Proposition~\ref{p:rotation_log_ineq} shows that $\mathcal{A}=1$ corresponds to the known case of volume. We  now substitute a centered zonoid $Z$ in place of $B$ in Proposition~\ref{p:rotation_log_ineq} and we additionally assume that $W$ is increasing (i.e. the measure $\mu$ is log-concave). We first need a preliminary lemma.
\begin{lem}
\label{l:hom}
Let $f\in L^1(\R^n,\gamma_n)$ be $k$-homogeneous, for some $k>-n$. Then, one has
$$\int_{\R^n}f(x)d\gamma_n(x)=\frac{2^\frac{k-2}{2}\Gamma(\frac{n+k}{2})}{\pi^\frac{n}{2}}\int_{\s^{n-1}}f(u)du.$$
\end{lem}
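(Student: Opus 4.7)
The plan is a direct evaluation in polar coordinates, decoupling the radial and spherical parts and then reducing the radial integral to a Gamma function. Writing $x = ru$ with $r>0$ and $u\in\s^{n-1}$, the Jacobian gives $dx = r^{n-1}\,dr\,du$, and by the assumed $k$-homogeneity, $f(ru) = r^{k} f(u)$. Substituting into the Gaussian density,
\[
\int_{\R^n} f(x)\,d\gamma_n(x) = \frac{1}{(2\pi)^{n/2}}\left(\int_0^\infty r^{k+n-1} e^{-r^2/2}\,dr\right)\left(\int_{\s^{n-1}} f(u)\,du\right),
\]
provided Fubini applies; this is justified because $f\in L^1(\R^n,\gamma_n)$ and, after applying the same identity to $|f|$, the hypothesis $k>-n$ together with the rapid decay of $e^{-r^2/2}$ ensures the product form is absolutely integrable.

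The radial integral is then evaluated by the substitution $t = r^2/2$, so that $r = \sqrt{2t}$ and $dr = dt/\sqrt{2t}$. This gives
\[
\int_0^\infty r^{k+n-1} e^{-r^2/2}\,dr = 2^{(k+n-2)/2} \int_0^\infty t^{(k+n)/2 - 1} e^{-t}\,dt = 2^{(k+n-2)/2}\, \Gamma\!\left(\tfrac{n+k}{2}\right),
\]
where convergence near $t=0$ requires exactly $(k+n)/2 > 0$, i.e. $k > -n$. Combining with the Gaussian prefactor $(2\pi)^{-n/2} = 2^{-n/2} \pi^{-n/2}$, the powers of $2$ telescope via $2^{(k+n-2)/2}/2^{n/2} = 2^{(k-2)/2}$, yielding the claimed constant $2^{(k-2)/2} \Gamma((n+k)/2) / \pi^{n/2}$.

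There is no genuine obstacle here; the only point requiring mild care is verifying the integrability needed to invoke Fubini, which in turn forces the condition $k > -n$ and implicitly shows that $f\in L^1(\s^{n-1})$ whenever $f\in L^1(\R^n,\gamma_n)$ under the homogeneity hypothesis.
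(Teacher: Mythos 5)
Your proof is correct and follows essentially the same route as the paper: polar coordinates, homogeneity to decouple the radial and spherical factors, and evaluation of the radial integral as a Gamma function. The only difference is that you spell out the substitution $t=r^2/2$ and the Fubini justification, which the paper leaves implicit.
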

\begin{proof}
Integrating in polar coordinates, we directly compute:
\[
\int_{\R^n}f(x)d\gamma_n(x)=\int_{0}^{+\infty}r^{n+k-1}e^{-\frac{r^2}{2}}\frac{dr}{(2\pi)^\frac{n}{2}}\int_{\s^{n-1}}f(u)du=
\frac{2^\frac{k-2}{2}\Gamma(\frac{n+k}{2})}{\pi^\frac{n}{2}}\int_{\s^{n-1}}f(u)du.
\]
\end{proof}

\begin{lem}[Proposition~\ref{p:rotation_log_ineq} for centered zonoids]
\label{l:zon}
Let $\mu$ be a Borel measure on $\R^n$ such that $d\mu(x)=e^{-W(|x|)}dx$ for some increasing, convex function $W:[0,\infty)\to\R$ that is differentiable at $R>0$. Then, for $A=RB_2^n,$ $B\in\mathcal{Z}^n$ and a compact, convex set $C\subset\R^n,$ \eqref{eq:objective} holds if $\mathcal{A}$ satisfies
$$\mathcal{A}n\int_0^1e^{W(R)-W(Rt)}t^{n-1}dt\left(1-\frac{RW^\prime(R)}{n}\right)\le 1.$$
\end{lem}
\begin{proof}
Notice that in \eqref{eq:rotation_log_ineq}  we may assume that $B=Z$ is a centered zonotope, and the general case of centered zonoids follows by approximation. Notice also that the expression is dilation invariant in $Z$. Therefore, using the additivity of both mixed volumes and the support function, we may assume that $Z:=[-\theta,\theta]$ for some $\theta\in \s^{n-1}$. One has $h_Z(u)=|\langle  u,\theta\rangle|$. Furthermore (see for example \cite{Sch14:book}), for compact, convex sets $K_2,\dots,K_n$ one has
$$V_n([-\theta,\theta],K_2,\dots,K_n)=\frac{2}{n}V_{n-1}(P_{\theta^\perp}K_2,\dots,P_{\theta^\perp}K_n),$$
where the subscripts emphasis that the first mixed volume is $n$-dimensional, the second mixed volume is $n-1$ dimensional, and the notation $P_{\theta^\perp} K_i$ denotes the orthogonal projection of $K_i$ onto the hyperplane through the origin orthogonal to the direction $\theta$. Therefore, we have
\begin{equation}V(B_2^n[n-2],Z,C)=\frac{2}{n}V_{n-1}(B_2^{n-1}[n-2],P_{\theta^\perp}C)=\frac{2}{n(n-1)}\int_{\s^{n-1}\cap\theta^\perp}h_C(u^\prime)du^\prime.
\label{eq:gwmv}
\end{equation}
Notice that $\nabla h_Z(u)=\theta\chi_{\langle  u,\theta\rangle>0}-\theta\chi_{\langle  u,\theta\rangle<0};$ inserting this, \eqref{eq:gwmv}, $V(B_2^n[n-1],Z)=\frac{2\kappa_{n-1}}{n},$ and $V(B_2^n[n-1],C)$ into \eqref{eq:rotation_log_ineq}, our inequality becomes
\begin{equation}\label{eq:local-zon-gamma}
        \int_{\s^{n-1}}\!\!h_C(u)du\geq \mathcal{A}\frac{\kappa_{n-1}}{\kappa_{n-2}\kappa_n}I_\mu(R)
        \left[\int_{\s^{n-1}\cap\theta^\perp}\!\!h_C(u^\prime)du^\prime-\frac{RW^\prime(R)}{2}\int_{\s^{n-1}}\!\!|\langle u,\theta\rangle| h_C(u)du\right]\!.
\end{equation}
Now, arguing like in \cite{AFO14}, let $\Pi_\theta$ denote the reflection operator with respect to the hyperplane $\theta^\perp$. Explicitly, $\Pi_{\theta}(x)=x-2\theta\langle x,\theta\rangle$. Then, consider the symmetrization $S_{\theta}C$ of $C$ in the direction $\theta$ to be the compact, convex set given by $S_{\theta}C:=\frac{C+\Pi_\theta C}{2}.$ Notice that $P_{\theta^\perp} C\subset S_{\theta}C.$ Furthermore, $\int_{\s^{n-1}}h_C(u)du=\int_{\s^{n-1}}h_{S_{\theta}C}(u)du\geq \int_{\s^{n-1}}h_{P_{\theta^\perp} C}(u)du.$ Hence, the quantity
$$ \int_{\s^{n-1}}h_C(u)du+\frac{\mathcal{A}RW^\prime(R)}{2}\frac{\kappa_{n-1}}{\kappa_{n-2}\kappa_n}I_\mu(R)\int_{\s^{n-1}}|\langle u,\theta\rangle| h_C(u)du$$ is bounded from below by the same quantity, but with $C$ replaced by $P_{\theta^\perp} C$. Consequently, it suffices to consider \eqref{eq:local-zon-gamma} only for the case when $C\subset \theta^\bot$.

It is known, (see, for example,  \cite[page 404, eq. (A.29)]{Gar06:book}), that if $K$ is a $k<n$ dimensional compact, convex set in $\R^n$, then, for $0\le i \le k$, one has
$$\frac{1}{c_{i, k}} V_k\left(K[i], B_{2}^{k}[k-i]\right)=\frac{1}{c_{i, n}} V_n\left(K[i], B_{2}^{n}[n-i]\right), \text { where } c_{i, k}=\frac{\kappa_{k-i}}{\left(\begin{array}{c}
k \\
i
\end{array}\right)}.$$
Using this, if $C\subset \theta^\bot$ then
\begin{align*}
\int_{\s^{n-1}} h_{C}(u) du &= nV_n\left(B_{2}^{n}[n-1],C\right)=n\frac{c_{1, n}}{c_{1, n-1}} V_{n-1}\left(B_{2}^{n-1}[n-2],C\right)
\\
&=\frac{\kappa_{n-1}}{\kappa_{n-2}}\int_{\s^{n-1} \cap \theta^{\perp}} h_{C}(u^\prime) d u^{\prime}.
\end{align*}

We now focus on the quantity $\int_{\s^{n-1}}|\langle u,\theta\rangle| h_C(u)du$. Due to the fact that $C\subset \theta^\bot$, it simplifies greatly. Apply Lemma~\ref{l:hom} to the $2$-homogeneous function $|\langle u,\theta\rangle| h_C(u)$ to obtain
\[
\int_{\s^{n-1}}|\langle u,\theta\rangle| h_C(u)du= \kappa_n\int_{\R^n}|\langle x,\theta\rangle| h_C(x)d\gamma_n(x).
\]
Using Fubini's theorem and rotation invariance to compute this last integral, we get
\[
\int_{\R^n}|\langle x,\theta\rangle| h_C(x)d\gamma_n(x)=
\int_{\R}|x_1|d\gamma_1(x_1)\int_{\theta^\bot}h_C(x)d\gamma_{n-1}(x)=
\frac{2}{\sqrt{2\pi}}\int_{\theta^\bot} h_C(x)d\gamma_{n-1}(x).
\]
Now apply Lemma~\ref{l:hom} to $h_C(u)$, which is $1$-homogeneous, but defined in  dimension $n-1$,  
\[
\int_{\theta^\bot} h_C(x)d\gamma_{n-1}(x)=\frac{2^\frac{-1}{2}\Gamma(\frac{n}{2})}{\pi^\frac{n-1}{2}}\int_{\s^{n-1}\cap\theta^\bot}h_C(u')du'.
\]
We thus obtain
\[
\int_{\s^{n-1}}|\langle u,\theta\rangle| h_C(u)du= \frac{\kappa_n\Gamma(\frac{n}{2})}{\pi^\frac{n}{2}}\int_{\s^{n-1}\cap\theta^\bot}h_C(u')du'=\frac{2}{n}\int_{\s^{n-1}\cap\theta^\bot}h_C(u')du'.
\]
When inserting the above relations for $\int_{\s^{n-1}}|\langle u,\theta\rangle| h_C(u)du$ and $\int_{\s^{n-1}} h_{C}(u) du$ into \eqref{eq:local-zon-gamma}, we see that every term has $\int_{\s^{n-1}\cap\theta^\bot}h_C(u')du'$ in it. It then cancels, and so it suffices to show
$$1\geq \frac{\mathcal{A}}{\kappa_n}I_\mu(R)\left(1-\frac{RW^\prime(R)}{n}\right).$$
Writing $I_\mu(R)$ in polar coordinates, we obtain that
$\frac{1}{\kappa_n}I_\mu(R)=n\int_0^1e^{W(R)-W(Rt)}t^{n-1}dt.$ Inserting this into the above yields the claim.
\end{proof}

We now state the main result of this section, which is that $\mathcal{A}=1$ when $B$ is a centered zonoid and $\mu$ is rotational invariant, log-concave Borel measure.
\begin{thm}
\label{t:main}
Let $\mu$ be a rotational invariant measure on $\R^n$ of the form $d\mu=e^{-W(|x|)}dx$ for an increasing, convex $W:[0,\infty)\to\R$. Then, for every $R>0$, a centered zonoid $Z$ and a compact, convex set $C$, one has
$$\mu(RB_2^n;Z)\mu(RB_2^n;C)\geq  \frac{\kappa^2_{n-1}}{\kappa_{n-2}\kappa_n}\mu(RB_2^n)\mu(RB_2^n;Z,C).$$
\end{thm}

\begin{proof}
Since the function $W$ is convex, it is differentiable almost everywhere; by approximation, we can assume $W$ is differentiable. Then, from Lemma~\ref{l:zon}, it suffices to show that
$$1\geq n\int_0^1e^{W(R)-W(Rt)}t^{n-1}dt\left(1-\frac{RW^\prime(R)}{n}\right).$$
If $R$ is so that $RW^\prime(R)\geq n$ then we are done, as the right-hand-side is non-positive. So, suppose $RW^\prime(R)<n$. Notice, from convexity, one has
$$\frac{W(R)-W(Rt)}{R-Rt}\leq W^\prime(R).$$
Therefore, we see that
$$\left(1-\frac{RW^\prime(R)}{n}\right)n\int_0^1e^{W(R)-W(Rt)}t^{n-1}dt\leq \left(1-\frac{RW^\prime(R)}{n}\right)n\int_0^1e^{W^\prime(R)R(1-t)}t^{n-1}dt.$$
Next, using Taylor series expansion of the exponential function, we use uniform convergence to interchange the integral and the summation:
$$\left(\!1-\frac{RW^\prime(R)}{n}\!\right)n\sum_{k=0}^\infty \frac{(RW^\prime(R))^k}{k!}\int_{0}^1\!(1-u)^ku^{n-1}du\!= \left(\!1-\frac{RW^\prime(R)}{n}\!\right)\sum_{k=0}^\infty (RW^\prime(R))^k\frac{n!}{(k+n)!},
$$
where we computed that $n\int_{0}^1(1-u)^ku^{n-1}du=nB\left(k+1,n\right)=\frac{k!n!}{\left(k+n\right)!},$
with $B(x,y)$ the Beta function. Next,
$$\sum_{k=0}^\infty (RW^\prime(R))^k\frac{n!}{(k+n)!}=1+\sum_{k=1}^\infty \left(\frac{RW^\prime(R)}{1+n}\right)\cdots\left(\frac{RW^\prime(R)}{k+n}\right).$$
But, notice that, since $W^\prime(R)R<n<n+1,$
$$1+\sum_{k=1}^\infty \left(\frac{RW^\prime(R)}{1+n}\right)\cdots\left(\frac{RW^\prime(R)}{k+n}\right)\le \sum_{k=0}^\infty \left(\frac{W^\prime(R)R}{n+1}\right)^k\!=\frac{1}{1-\frac{W^\prime(R)R}{n+1}}=\frac{n+1}{n+1-W^\prime(R)R}.$$
Consequently,
$$\left(1-\frac{RW^\prime(R)}{n}\right)\frac{n+1}{n+1-W^\prime(R)R}=\frac{n+1}{n}\frac{n-RW^\prime(R)}{n+1-W^\prime(R)R}=\frac{n+1}{n}\left(1-\frac{1}{n+1-W^\prime(R)R}\right).$$
But $W^\prime(R)R$ increases with $R$, and thus the quantity above decreases with $R$. Therefore, the quantity is maximized when $R=0$, which is precisely 1.
\end{proof}
Let us show a general approach to obtain sharper constants. Suppose one considered finding $\mathcal{A}_{\mu,R}$ such that, for a Borel measure $\mu$ on $\R^n$ of the form $d\mu(x)=e^{-W(|x|)}dx$ for $W:[0,\infty)\mapsto\R$, and a fixed $R>0$, one has for every compact, convex set $C$ and $Z\in\mathcal{Z}^n$ that
$$\mu(RB_2^n;Z)\mu(RB_2^n;C)\geq \mathcal{A}_{\mu,R} \frac{\kappa^2_{n-1}}{\kappa_{n-2}\kappa_n}\mu(RB_2^n)\mu(RB_2^n;Z,C).$$
One sees from Lemma~\ref{l:zon} that, if $d\mu(x)=e^{-W(|x|)}dx$ with $W:[0,\infty)\to\R$ differentiable, increasing,  and convex, then one needs
\begin{equation}
1\geq \mathcal{A}_{\mu,R}\int_0^1e^{W(R)-W(Rt)}t^{n-1}dt\left(n-RW^\prime(R)\right).
\label{eq:log_integral_bound}
\end{equation}
If $n\leq RW^\prime (R)$, then \eqref{eq:log_integral_bound} is non-positive and so any choice of $\mathcal{A}_{\mu,R}$ works (this implies that $\mu(RB_2^n;Z,C) \leq 0$ in this case, so the bound is trivial). If $n > RW^\prime (R),$ then, from the end of the proof of Theorem~\ref{t:main}, we are able to set $A_{\mu,R}$ to be the reciprocal of the final bound, that is
\begin{equation}A_{\mu,R}=\frac{n}{n+1}\left(1+\frac{1}{n-W^\prime(R)R}\right).
\label{eq:sharper_constant}
\end{equation}
\noindent Notice \eqref{eq:sharper_constant} is increasing in $R$; where the value of $1$ corresponds to $\lim_{R\to 0} \mathcal{A}_{\mu,R}=1.$ So, for an arbitrary $R>0$, $A_{\mu,R}\geq 1$, and equals $1$ if, and only if, $\mu$ is a constant multiple of the Lebesgue measure.

\subsection{Improved inequalities for a special class of measures}

We see, however, that we can do even better by avoiding the estimate $\frac{W(R)-W(Rt)}{R-Rt}\leq W^\prime(R)$ in Theorem~\ref{t:main}. While we cannot avoid this for general rotational invariant $\mu$, we avoid this in the Gaussian case, and also, in general, in the case of measures of the form $d\mu=\alpha e^{-|x|^p/\beta}dx$ for some $\alpha,\beta>0$ and $p\geq 1$. We first start with a technical lemma.
\begin{lem}
\label{l:gaussian_integral}
Let $\beta>0,$ $p,n\geq 1$ and $R>0$ so that $R^p\in\left(0,\beta+\frac{\beta n}{p}\right)$. Then,
$$1+\frac{pR^p}{{p\beta+\beta n}-pR^p}\ge n\int_{0}^1e^{\frac{R^p}{\beta}(1-r^p)}r^{n-1}dr.$$
In particular, for $R=1$ and $p=\beta=2,$
$$1+\frac{1}{n+1}\ge n\int_{0}^1e^{\frac{(1-r^2)}{2}}r^{n-1}dr.$$
\end{lem}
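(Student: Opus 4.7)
\medskip

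The plan is to reduce the inequality to a clean one-parameter form by normalization, then to peel off layers with integration by parts and an elementary bound, finishing with a one-variable inequality that can be checked by a single derivative.

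First, I would set $a := R^p/\beta$ and $m := n/p$. The hypothesis $R^p\in(0,\beta+\beta n/p)$ becomes $a\in(0,1+m)$ and one checks that the LHS simplifies to $\tfrac{1+m}{1+m-a}$ (after dividing numerator and denominator by $p\beta$). On the right, the substitution $u=r^p$ converts $n\int_0^1 e^{a(1-r^p)}r^{n-1}\,dr$ into $m e^{a}\int_0^1 e^{-au}u^{m-1}\,du$. So the target becomes
\[
\frac{1+m}{1+m-a}\;\ge\; m e^{a}\int_0^1 e^{-au}u^{m-1}\,du, \qquad a\in(0,1+m),\ m>0.
\]

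Next, integration by parts with $U=e^{-au}$ and $dV=u^{m-1}du$ gives
\[
m\int_0^1 e^{-au}u^{m-1}\,du = e^{-a}+a\int_0^1 u^m e^{-au}\,du,
\]
so that $m e^{a}\int_0^1 e^{-au}u^{m-1}\,du = 1+ae^{a}\int_0^1 u^m e^{-au}\,du$. Substituting $w=1-u$ in the remaining integral yields $e^{a}\int_0^1 u^m e^{-au}\,du=\int_0^1(1-w)^m e^{aw}\,dw$. After moving the $1$ to the left and dividing by $a>0$, the inequality reduces to
\[
\frac{1}{1+m-a}\;\ge\;\int_0^1 (1-w)^m e^{aw}\,dw.
\]

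Now I would use the elementary bound $(1-w)^m\le e^{-mw}$ (valid for $w\in[0,1]$, $m\ge 0$, since $1-w\le e^{-w}$). This gives $\int_0^1(1-w)^m e^{aw}\,dw\le \int_0^1 e^{(a-m)w}\,dw = \tfrac{e^{a-m}-1}{a-m}$. Setting $t:=a-m$ (so $t<1$ by hypothesis), it then suffices to verify
\[
\frac{e^{t}-1}{t}\;\le\;\frac{1}{1-t},\qquad\text{i.e.}\qquad (e^t-1)(1-t)\le t.
\]
This one-variable inequality I would prove by introducing $g(t):=t-(e^t-1)(1-t)=1-(1-t)e^t$; a direct computation gives $g'(t)=t e^{t}$, so $g$ attains its global minimum at $t=0$, where $g(0)=0$. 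Hence $g\ge 0$ on $\mathbb{R}$, which is exactly what is needed (and both signs $t\lessgtr 0$ are handled uniformly).

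The special case $R=1$, $p=\beta=2$ corresponds to $a=\tfrac12$, $m=\tfrac{n}{2}$, so the LHS becomes $\tfrac{1+n/2}{1/2+n/2}=1+\tfrac{1}{n+1}$, giving the advertised statement immediately. The only step that requires real care is the last one: one must recognize that the chain of bounds (the exponential bound on $(1-w)^m$ and the final ratio inequality $(e^t-1)(1-t)\le t$) lines up correctly so that equality is only used at the normalized origin $t=0$, which is what makes the final constant $\tfrac{1}{1+m-a}$ sharp in the right regime; the actual verification of each step is routine.
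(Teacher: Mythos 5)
Your reduction is clean and correct up to the final estimate: the normalization $a=R^p/\beta$, $m=n/p$, the substitution $u=r^p$, the integration by parts, and the reformulation as $\frac{1}{1+m-a}\ge\int_0^1(1-w)^m e^{aw}\,dw$ are all valid. The gap is in the last step. The one-variable inequality you need is $\frac{e^t-1}{t}\le\frac{1}{1-t}$, but this is \emph{false} for $t<0$: at $t=-1$ the left side is $1-e^{-1}\approx 0.632$ while the right side is $0.5$. Your function $g(t)=1-(1-t)e^t\ge 0$ does prove $(e^t-1)(1-t)\le t$ for all $t$, but passing from that to $\frac{e^t-1}{t}\le\frac{1}{1-t}$ requires dividing by $t(1-t)$, which is negative when $t<0$ and flips the inequality; so the two displays are \emph{not} equivalent on $t<0$, contrary to your remark that both signs are handled uniformly. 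This is not a corner case: $t=a-m<0$ means $R^p<\beta n/p$, and the ``in particular'' specialization $R=1$, $p=\beta=2$ gives $t=(1-n)/2<0$ for every $n\ge 2$, i.e.\ exactly the case the paper uses. Nor can the sign issue alone be patched, because the intermediate bound $(1-w)^m\le e^{-mw}$ is already too lossy there: for $m=1$, $a=1/2$ the true integral is $4e^{1/2}-6\approx 0.595$, your upper bound $\frac{e^{-1/2}-1}{-1/2}\approx 0.787$, and the target $\frac{1}{1.5}\approx 0.667$, so the chain of inequalities genuinely breaks between the second and third quantities.

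For comparison, the paper works in the same normalized variables but expands $e^{a(1-u)}$ as a Taylor series, integrates term by term using the Beta function to get $\sum_{k\ge 0}a^k\,\frac{\Gamma(1+m)}{\Gamma(k+1+m)}=1+\sum_{k\ge1}\prod_{j=1}^{k}\frac{a}{j+m}$, and then bounds each factor $\frac{a}{j+m}\le\frac{a}{1+m}$ to dominate the series by the geometric series $\sum_k\bigl(\frac{a}{1+m}\bigr)^k=\frac{1+m}{1+m-a}$. That termwise bound is valid on the whole range $a<1+m$, which is why it covers the regime your exponential bound cannot. If you want to salvage your route, you would need a sharper replacement for $(1-w)^m\le e^{-mw}$ when $a<m$ (for instance, expanding $e^{aw}$ and integrating against $(1-w)^m$ exactly, which lands you back at the paper's Beta-function series).
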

\begin{proof}
Start by writing the right-hand side as
$$\frac{n}{p}\int_{0}^1e^{\frac{R^p}{\beta}(1-r^p)}pr^{n-1}dr.$$
Next, let $u=r^p$. Then, $u^{\frac{n}{p}-1}du=pr^{n-1}dr$, and the right-hand side becomes
$$\frac{n}{p}\int_{0}^1e^{\frac{R^p}{\beta}(1-u)}u^{\frac{n}{p}-1}du.$$
Using the Taylor series expansion of the exponential function, and then the uniform convergence to interchange the integral and the summation, yields:
$$\frac{n}{p}\sum_{k=0}^\infty \frac{R^{kp}}{\beta^k k!}\int_{0}^1(1-u)^ku^{\frac{n}{p}-1}du.$$
Notice that $$\frac{n}{p}\int_{0}^1(1-u)^ku^{\frac{n}{p}-1}du=\frac{n}{p}B\left(k+1,\frac{n}{p}\right)=\frac{k!\Gamma\left(\frac{n}{p}+1\right)}{\Gamma\left(k+1+\frac{n}{p}\right)},$$
where $\Gamma(x)$ is the Gamma function. Inserting this computation into the above series yields
$$\sum_{k=0}^\infty \left(\frac{R^p}{\beta}\right)^k\frac{\Gamma\left(1+\frac{n}{p}\right)}{\Gamma\left(k+1+\frac{n}{p}\right)}.$$
But this can be written as
$$1+\sum_{k=1}^\infty \left(\frac{R^p}{\beta}\right)^k\frac{1}{\left(k+\frac{n}{p}\right)\cdots\left(1+\frac{n}{p}\right)}=1+\sum_{k=1}^\infty \frac{pR^p}{p\beta k+\beta n}\cdots\frac{pR^p}{p\beta+\beta n}.$$
But, notice that, from our choice of $R,$
$$1+\sum_{k=1}^\infty \frac{pR^p}{p\beta k+\beta n}\cdots\frac{pR^p}{p\beta+\beta n}\le \sum_{k=0}^\infty \left(\frac{pR^p}{p\beta+\beta n}\right)^k=\frac{1}{1-\frac{pR^p}{p\beta+\beta n}},$$
which yields our result.
\end{proof}

\begin{thm}
\label{t:p}
Fix $n\geq 2,$ $p\geq 1,$ and $\alpha,\beta>0$. Let $\nu$ be the Borel measure on $\R^n$ given by $d\nu=\alpha e^{-\frac{|x|^p}{\beta}}dx$. Fix $R>0.$ For a centered zonoid $Z$ and a compact, convex set $C,$

$$\nu(RB_2^n;Z)\nu(RB_2^n;C)\geq \mathcal{A}_{\nu,R} \frac{\kappa^2_{n-1}}{\kappa_{n-2}\kappa_n}\nu(A)\nu(RB_2^n;Z,C),$$
where
$$\mathcal{A}_{\nu,R}=\left(1-\frac{pR^p}{p\beta+\beta n}\right)\left(1+\frac{pR^p}{\beta n -pR^p}\right).$$
\end{thm}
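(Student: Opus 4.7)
The plan is to apply Lemma~\ref{l:zon} directly to the potential $W(r)=r^p/\beta$, then reduce the resulting integral condition to exactly the bound supplied by Lemma~\ref{l:gaussian_integral} after a simple algebraic simplification.

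First, the constant $\alpha$ plays no role since both sides of the claimed inequality are homogeneous of the same degree in $\alpha$, so I would assume $\alpha=1$. Since $p\geq 1$, the function $W(r)=r^p/\beta$ is convex on $[0,\infty)$ and differentiable on $(0,\infty)$, so $\nu$ is a rotational invariant log-concave measure and Lemma~\ref{l:zon} applies (for the edge case $p=1$, $W$ is differentiable wherever we need it, namely at $R>0$). By Lemma~\ref{l:zon}, it suffices to verify
\begin{equation*}
1\;\geq\;\mathcal{A}_{\nu,R}\cdot n\int_0^1 e^{W(R)-W(Rt)}t^{n-1}\,dt\cdot\left(1-\frac{RW'(R)}{n}\right),
\end{equation*}
and for our $W$ we have $RW'(R)=pR^p/\beta$ and $W(R)-W(Rt)=R^p(1-t^p)/\beta$.

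The second step is the key algebraic simplification. Setting $a=p\beta+\beta n$, $b=\beta n$, $c=pR^p$, one computes
\begin{equation*}
\mathcal{A}_{\nu,R}=\frac{a-c}{a}\cdot\frac{b}{b-c}=\frac{b(a-c)}{a(b-c)},\qquad 1-\frac{RW'(R)}{n}=\frac{b-c}{b},
\end{equation*}
so the $(b-c)$ factors cancel and the required condition reduces to
\begin{equation*}
\frac{a-c}{a}\cdot n\int_0^1 e^{R^p(1-t^p)/\beta}t^{n-1}\,dt\;\leq\;1.
\end{equation*}
This is a much cleaner inequality and makes the role of Lemma~\ref{l:gaussian_integral} transparent: that lemma gives exactly the bound $n\int_0^1 e^{R^p(1-t^p)/\beta}t^{n-1}\,dt\leq a/(a-c)$ whenever $c<a$, which precisely saturates the reduced inequality.

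Finally, I would conclude with a short case analysis. If $c<a$, i.e.\ $R^p<\beta+\beta n/p$, applying Lemma~\ref{l:gaussian_integral} to the integral produces $\tfrac{a-c}{a}\cdot\tfrac{a}{a-c}=1$, as required. If $c\geq a$, then $(a-c)/a\leq 0$ while the integral is positive, so the reduced inequality holds trivially. The borderline value $c=b$ (where the factor $\mathcal{A}_{\nu,R}$ looks singular through the cancelled $b-c$) causes no issue: there $1-RW'(R)/n=0$, so Lemma~\ref{l:zon}'s hypothesis is satisfied with any finite constant, and by continuity in $R$ we may apply the theorem on either side. There is no serious obstacle in this argument; the only delicate point is checking that the algebraic cancellation is exact so that the constant $\mathcal{A}_{\nu,R}$ matches Lemma~\ref{l:gaussian_integral}'s bound on the nose, which is why the stated constant takes its particular two-factor form rather than the looser estimate \eqref{eq:sharper_constant} obtained in the remark after Theorem~\ref{t:main}.
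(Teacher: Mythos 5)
Your proposal is correct and uses the same two ingredients as the paper (the reduction via Lemma~\ref{l:zon} followed by Lemma~\ref{l:gaussian_integral}), but your case analysis is organized differently and, in fact, more carefully. The paper verifies the condition $1\geq \mathcal{A}_{\nu,R}\,n\int_0^1 e^{R^p(1-t^p)/\beta}t^{n-1}dt\,(1-\tfrac{pR^p}{\beta n})$ by splitting on whether $R^p\geq \beta n/p$, claiming the right-hand side is non-positive in that case; but with $a=p\beta+\beta n$, $b=\beta n$, $c=pR^p$, in the band $b<c<a$ both $\mathcal{A}_{\nu,R}$ and $(1-c/b)$ are negative, so their product $\tfrac{a-c}{a}$ is positive and that dismissal does not apply. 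Your exact cancellation $\mathcal{A}_{\nu,R}(1-\tfrac{c}{b})=\tfrac{a-c}{a}$ reduces everything to $\tfrac{a-c}{a}\,n\int_0^1 e^{R^p(1-t^p)/\beta}t^{n-1}dt\leq 1$, which is handled by Lemma~\ref{l:gaussian_integral} precisely on its full range of validity $c<a$ and is trivial for $c\geq a$; this covers the intermediate band cleanly and also isolates the removable singularity at $c=b$ where the stated constant is formally undefined. In short: same route, but your bookkeeping is the correct version of the paper's, and it makes transparent why the two-factor form of $\mathcal{A}_{\nu,R}$ is exactly what Lemma~\ref{l:gaussian_integral} can support.
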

\begin{proof}
From Lemma \ref{l:zon} one needs to show that $\mathcal{A}_{\nu,R}$ satisfies 
$$1\geq \mathcal{A}_{\nu,R}\int_{0}^1e^{\frac{R^p}{\beta}(1-r^p)}r^{n-1}dr\left(n-\frac{pR^p}{\beta}\right).$$
If $R^p\geq \frac{\beta n}{p}$ then the right-hand side is non-positive, and we are done. Otherwise, one obtains the result from Lemma~\ref{l:gaussian_integral}, which we can use since $R^p \leq  \frac{\beta n}{p} < \beta +  \frac{\beta n}{p}.$
\end{proof}

We next apply Theorem~\ref{t:p} to the case of the Gaussian measure and the unit ball, to demonstrate straight explicitly how the bounds we obtain improve the volume case and \eqref{eq:sharper_constant}.
\begin{cor}
\label{cor:gaf}
Fix $n\geq 2$. Let $Z$ be a centered zonoid and $C$ a compact, convex set. Then,
\begin{equation}
    \label{eq:gaf}
\gamma_n(B_2^n;Z)\gamma_n(B_2^n;C)\geq\frac{n}{n-1}\frac{n+1}{n+2} \frac{\kappa^2_{n-1}}{\kappa_{n-2}\kappa_n} \gamma_n(B_2^n)\gamma_n(B_2^n;Z,C).
\end{equation}
\end{cor}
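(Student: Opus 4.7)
The proof proposal is a straightforward specialization of Theorem~\ref{t:p}. The Gaussian measure $\gamma_n$ has density
$$
\frac{d\gamma_n}{dx}(x)=\frac{1}{(2\pi)^{n/2}}e^{-|x|^2/2},
$$
so $\gamma_n=\nu$ in the notation of Theorem~\ref{t:p} with the parameter choices $\alpha=(2\pi)^{-n/2}$, $p=2$, and $\beta=2$. Since the inequality in Corollary~\ref{cor:gaf} corresponds to the choice $R=1$ (that is, $A=B_2^n$), the plan is to invoke Theorem~\ref{t:p} with these parameters and then evaluate the resulting constant $\mathcal{A}_{\gamma_n,1}$.

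With $p=\beta=2$ and $R=1$, the formula for $\mathcal{A}_{\nu,R}$ gives
$$
\mathcal{A}_{\gamma_n,1}=\left(1-\frac{pR^p}{p\beta+\beta n}\right)\left(1+\frac{pR^p}{\beta n-pR^p}\right)=\left(1-\frac{2}{4+2n}\right)\left(1+\frac{2}{2n-2}\right).
$$
Simplifying each factor separately yields
$$
1-\frac{2}{4+2n}=1-\frac{1}{n+2}=\frac{n+1}{n+2},\qquad 1+\frac{2}{2n-2}=1+\frac{1}{n-1}=\frac{n}{n-1},
$$
so $\mathcal{A}_{\gamma_n,1}=\frac{n}{n-1}\cdot\frac{n+1}{n+2}$, which is exactly the constant appearing in \eqref{eq:gaf}.

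Substituting this value of $\mathcal{A}_{\gamma_n,1}$ directly into the conclusion of Theorem~\ref{t:p} produces the claimed inequality. Since the application is a direct substitution into a result already established earlier in the paper, there is no real obstacle; the only nontrivial step is the arithmetic verification that the two rational factors multiply to $\tfrac{n}{n-1}\cdot\tfrac{n+1}{n+2}$ as displayed in \eqref{eq:gaf}. It is worth remarking that $\tfrac{n}{n-1}\cdot\tfrac{n+1}{n+2}>1$ for all $n\ge 2$, confirming the comment in the introduction that the Gaussian B{\'e}zout-type inequality obtained here is strictly sharper than the volume analogue \eqref{eq:sharp_af}.
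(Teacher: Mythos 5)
Your proposal is correct and is exactly the paper's intended argument: the paper derives Corollary~\ref{cor:gaf} by applying Theorem~\ref{t:p} with $p=\beta=2$, $R=1$ (and $\alpha=(2\pi)^{-n/2}$), and your arithmetic simplification of $\mathcal{A}_{\gamma_n,1}$ to $\tfrac{n}{n-1}\cdot\tfrac{n+1}{n+2}$ is right.
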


From using a geometric series to approximate the Taylor series in Lemma~\ref{l:gaussian_integral}, the above result is still not sharp. If we allow $\beta$ to depend on $p$, then we can improve our bound by using Jensen's inequality. For simplicity, we consider only the case when $R=1.$ For general $R$, the quantities $\beta$ and $p$ would depend on $R$ as well.
\begin{lem}\label{l:gaussian_jensen_integral}
Suppose $p,n\geq 1$ and pick $\beta$ so that $\beta \geq 1+\frac{1}{p-1}$. Then,
$$n\int_0^1 e^{\frac{(1-r^p)}{\beta}}r^{n-1}dr \leq e^{\frac{1-\left(\frac{n}{n+1}\right)^p}{\beta}}.$$
\end{lem}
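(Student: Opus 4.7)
The plan is to recognize the left-hand side as an expectation against a probability density on $[0,1]$, and then apply Jensen's inequality after verifying concavity of a suitable integrand.

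First I would note that $nr^{n-1}\,dr$ is a probability measure on $[0,1]$ (since $\int_0^1 nr^{n-1}dr = 1$), so if $X$ denotes a random variable on $[0,1]$ with this density, the integral on the left is exactly $\mathbb{E}\bigl[\phi(X)\bigr]$, where
$$\phi(r) = e^{(1-r^p)/\beta}.$$
A direct computation gives $\mathbb{E}[X] = n\int_0^1 r^n\,dr = \tfrac{n}{n+1}$, which is precisely the point appearing in the exponent on the right-hand side. So to conclude, it suffices to show that $\phi$ is concave on $[0,1]$, for then Jensen's inequality yields
$$n\int_0^1 e^{(1-r^p)/\beta} r^{n-1}\,dr = \mathbb{E}[\phi(X)] \leq \phi\bigl(\mathbb{E}[X]\bigr) = e^{(1-(n/(n+1))^p)/\beta}.$$

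The main step is therefore to verify the concavity claim. Computing,
$$\phi''(r) = \phi(r) \cdot \frac{p\, r^{p-2}}{\beta^2}\bigl(p\, r^{p} - \beta(p-1)\bigr).$$
Thus $\phi''(r) \leq 0$ on $[0,1]$ as soon as $p\,r^{p} \leq \beta(p-1)$ for all $r \in [0,1]$, which (using $r^p \leq 1$) is guaranteed by $\beta(p-1)/p \geq 1$, that is, $\beta \geq p/(p-1) = 1 + \tfrac{1}{p-1}$. This is exactly the hypothesis imposed in the statement, so concavity holds on $[0,1]$ and Jensen's inequality applies.

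The step I expect to be the most delicate (rather than technically hard) is identifying the right convexity/concavity structure: naively applying Jensen with the convexity of the exponential would give the wrong direction. The key observation is that the hypothesis $\beta \geq 1 + \tfrac{1}{p-1}$ is tuned precisely so that the combined function $r \mapsto e^{(1-r^p)/\beta}$ becomes concave on $[0,1]$ despite the convexity of the outer exponential, which is what unlocks the upper bound. Once this is seen, the argument reduces to the elementary second-derivative check above and a one-line application of Jensen's inequality.
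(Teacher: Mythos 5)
Your proposal is correct and follows essentially the same route as the paper: interpret $nr^{n-1}\,dr$ as a probability measure on $[0,1]$, verify via the second-derivative computation that the hypothesis $\beta \geq 1+\tfrac{1}{p-1}$ makes $r\mapsto e^{(1-r^p)/\beta}$ concave there, and apply Jensen's inequality at the mean $n/(n+1)$. The second-derivative formula and the resulting condition on $\beta$ match the paper's exactly.
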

\begin{proof}
Observe that $nr^{n-1}dr$ is a probability measure on $[0,1]$, Furthermore, $\odv[2]{}{r}e^{\frac{(1-r^p)}{\beta}}=\frac{p}{\beta}e^{\frac{(1-r^p)}{\beta}}r^{p-2}\left[\frac{p}{\beta}r^p-(p-1)\right]$. From the choice of $\beta$, this is less than $0$ or all $r\in[0,1]$. Hence, from Jensen's inequality
$$n\int_0^1 e^{\frac{(1-r^p)}{\beta}}r^{n-1}dr\leq \exp{\left(\frac{1}{\beta}\left[1-\left(n\int_0^1r^{n}dr\right)^p\right]\right)},$$
which is our claim.
\end{proof}

\begin{cor}
\label{cor:equality}
Fix $n\geq 2,$ $p\geq 1,$ $\alpha>0,$ and $\beta\ge 1+\frac{1}{p-1}$. Let $\nu$  be the Borel measure on $\R^n$ given by $d\nu=\alpha e^{-\frac{|x|^p}{\beta}}dx$. For $Z$ a centered zonoid and $C$ a compact, convex set,

$$\nu(B_2^n;Z)\nu(B_2^n;C)\geq \mathcal{A}_{\nu} \frac{\kappa^2_{n-1}}{\kappa_{n-2}\kappa_n}\nu(A)\nu(B_2^n;Z,C),$$
where
$$\mathcal{A}_{\nu}=\frac{n}{n-1}e^{\frac{\left(\frac{n}{n+1}\right)^p-1}{\beta}}.$$
\end{cor}
It is not true a priori that the constant in Corollary~\ref{cor:equality} is sharper than in Theorem~\ref{t:p}. This is true in the case of the Gaussian measure.
\begin{reptheorem}{t:sharp}
Fix $n\geq 2$. Let $Z$ be a centered zonoid in $\R^n$ and $C$ a compact, convex set in $\R^n$. Then, 
$$\gamma_n(B_2^n;Z)\gamma_n(B_2^n;C)\geq e^{-\frac{(2n+1)}{2(n+1)^2}}\frac{n}{n-1} \frac{\kappa^2_{n-1}}{\kappa_{n-2}\kappa_n} \gamma_n(B_2^n)\gamma_n(B_2^n;Z,C).$$
Furthermore, this inequality is sharper than in Corollary~\ref{cor:gaf}.
\end{reptheorem}

\begin{proof}
The first claim follows from Lemma~\ref{l:gaussian_jensen_integral}, Lemma~\ref{l:zon} and the fact that
$$e^{\frac{1-\left(\frac{n}{n+1}\right)^2}{2}}=e^{\frac{2n+1}{2(n+1)^2}}.$$
Now, we must show that
$$e^{\frac{2n+1}{2(n+1)^2}}\leq \frac{n+2}{n+1}=1+\frac{1}{n+1},$$
which is true, as the function given by 
$1+\frac{1}{x+1}-e^{\frac{2x+1}{2(x+1)^2}}$ is positive for all $x\in\R^+$ and decreases to $0$ asymptotically as $x\to \infty$.
\end{proof}

\section{Concluding Remarks}
\label{sec:con_re}

There are questions related to those discussed in this paper that we briefly comment on. Firstly, it was noticed by 
\cite{FMMZ18} that the supermodularity property 
\begin{equation}\label{eq:supmod}
\vol_n(A)+\vol_n(A+B+C)\geq \vol_n(A+B)+\vol_n(A+C)    
\end{equation}
holds for any convex bodies $A, B, C$ in $\R^n$ (in fact, they also conjectured that it should hold for arbitrary compact sets $B, C$ when $A$ is convex, and proved this in dimension 1). It turns out that the possible negativity of the mixed measure $\gamma_n(A;B,C)$ immediately implies (by using an equivalence theorem discussed, for example, in \cite{FMZ22}, where the inequality \eqref{eq:supmod} was shown to be equivalent to nonnegativity of certain mixed volumes) that such a supermodularity property cannot hold when volume is replaced by Gaussian measure. Further details may be found in the companion paper \cite{FLMZ23_2}.

In another direction, although $\vol_n^{(1/n)}$ is not fractionally superadditive on the set of compact sets in $\R^n$ thanks to the counterexample of \cite{FMMZ16} (and hence is neither supermodular nor Schur-concave, as observed in \cite{MNT20}), it was proved recently by \cite{BM22} that $\vol_n$ is fractionally superadditive on compact sets in $\R^n$. Such questions for more general measures than volume are also discussed in the companion paper \cite{FLMZ23_2}.

\printbibliography

\noindent Matthieu Fradelizi
\\
Univ Gustave Eiffel, Univ Paris Est Creteil, CNRS, LAMA UMR8050, F-77447 Marne-la-Vall\'ee, France.
\\
E-mail address: matthieu.fradelizi@univ-eiffel.fr
\vspace{2mm}
\\
\noindent Dylan Langharst
\\
Department of Mathematical Sciences, Kent State University, Kent, OH 44242, USA. 
\\
E-mail address: dlanghar@kent.edu
\vspace{2mm}
\\
\noindent Mokshay Madiman 
\\
University of Delaware, Department of Mathematical Sciences, 501 Ewing Hall, Newark, DE 19716, USA. 
\\
E-mail address: madiman@udel.edu
\vspace{2mm}
\\
\noindent Artem Zvavitch
\\
Department of Mathematical Sciences, Kent State University, Kent, OH 44242, USA. 
\\
E-mail address: azvavitc@kent.edu

\end{document}